\numberwithin{equation}{section}
\newtheorem{thm}{Theorem}[section]
\newtheorem{lem}{Lemma}[section]
\newtheorem{rem}{Remark}[section]
\newtheorem{prop}{Proposition}[section]
\newcommand{\ed}{\end {document}}
\begin{document}

\title[Stability of Strang splitting method]{Stability and convergence of Strang splitting. Part II:  Tensorial Allen-Cahn equations}

\author[D. Li]{ Dong Li}
\address{D. Li, SUSTech International Center for Mathematics, and Department of Mathematics,  Southern University of Science and Technology,
	Shenzhen, P.R. China}
\email{lid@sustech.edu.cn}

\author[C.Y. Quan]{Chaoyu Quan}	
\address{C.Y. Quan, SUSTech International Center for Mathematics, and Department of Mathematics,  Southern University of Science and Technology,
	Shenzhen, P.R. China}
\email{quancy@sustech.edu.cn}

\author[J. Xu]{Jiao Xu}	
\address{J. Xu, SUSTech International Center for Mathematics,  Southern University of Science and Technology,
	Shenzhen, China }
\email{xuj7@sustech.edu.cn}

\maketitle

\begin{abstract}
We consider the second-order in time  Strang-splitting approximation for tensorial (e.g. vector-valued
and matrix-valued) Allen-Cahn equations.  Both the linear propagator and the nonlinear propagator
are computed explicitly. For the vector-valued case, we prove the maximum principle and unconditional
energy dissipation for a judiciously modified energy functional. The modified energy functional is close
to the classical energy up to $\mathcal O(\tau)$ where $\tau$ is the splitting step. For the
matrix-valued case, we prove a sharp maximum principle in the matrix Frobenius norm.
We show modified energy dissipation under very mild splitting step constraints. We exhibit 
several numerical examples to show the efficiency of the method
as well as  the sharpness of the results.
\end{abstract}

\section{Introduction}
In this work we investigate the stability of second-order in time  Strang-splitting methods applied
to two models: One is the vector-valued Allen-Cahn (AC) equation, and the other is the matrix-valued
Allen-Cahn system. The operator splitting methods have been extensively used
 in the numerical simulation of many physical problems,
including phase-field equations \cite{CKQT15, WT16, LQ1a, LQ1b, LQ1c, L21, LQ1d, K10}, 
Schr\"odinger equations \cite{BJM02, T12, LW20}, and  the reaction-diffusion systems \cite{D01, Nie11}.  A prototypical second order in time method is the Strang splitting approximation
\cite{St68, M90}. In a slightly more general set up,  we consider the following 
abstract parabolic problem:
\begin{align} \label{a0}
\begin{cases}
\partial_t u = \mathcal L u - f(u), \quad t>0; \\
u\Bigr|_{t=0} = u_0.
\end{cases}
\end{align}
where $u: [0, \infty) \to \mathbb B$ and $\mathbb B$ is a real Banach space.  The operator
$\mathcal L: \; \mathcal D(\mathcal L) \subset \mathbb B \to \mathbb B$ is a dissipative closed operator
which typically is the infinitesimal generator of a strongly-continuous dissipative semigroup. 
The domain $\mathcal D (\mathcal L)$ is typically a dense subset of $\mathbb B$. 
On the other hand $f:\, \mathbb B \to \mathbb B$ is a nonlinear operator.  
Take $\tau>0$ as the splitting time step. Define for $t>0$,
\begin{align}
\mathcal S_{\mathcal  L}^{(\mathbb B)} (t) = e^{t\mathcal L},
\end{align}
which is the solution operator to the linear equation $\partial_t u = \mathcal L u$. 
Define $\mathcal S_{\mathcal N}^{(\mathbb B)} (\tau)$ as the nonlinear solution operator to
the system
\begin{align}
\begin{cases}
\partial_t u = - f (u), \; 0<t \le \tau; \\
u\Bigr|_{t=0} =b \in \mathbb B.
\end{cases}
\end{align}
In yet other words, $\mathcal S_{\mathcal N}^{(\mathbb B)} (\tau)$
is the map $b\to u(\tau)$.   The Strang-splitting approximation for
\eqref{a0} takes the form
\begin{align} \label{a4}
u^{n+1} = \mathcal S^{(\mathbb B)}_{\mathcal L}(\tau/2 ) \mathcal S^{(\mathbb B)}_{\mathcal N}(\tau)
\mathcal S_{\mathcal L}^{(\mathbb B)}  (\tau/2)u^n, \quad n\ge 0.
\end{align}
Let $u^{\mathrm{ex}} $ be the exact solution to \eqref{a0}. In general it is expected that
on a finite time interval $[0, T]$
\begin{align} \label{a6}
\sup_{n\tau \le T} \| u^{\mathrm{ex} }(n\tau)  -u^{n}  \| = \mathcal O(\tau^2),
\end{align}
where $\| \cdot \|$ denotes a working norm which is allowed to be weaker than the norm endowed 
with the Banach space $\mathbb B$.  On the other hand, the local truncation error is
typically $\mathcal O(\tau^3)$, i.e.
\begin{align} \label{a8}
\| \mathcal S^{\mathrm{ex} } (\tau) a -  \mathcal S^{(\mathbb B)}_{\mathcal L}(\tau/2 ) \mathcal S^{(\mathbb B)}_{\mathcal N}(\tau)
\mathcal S_{\mathcal L}^{(\mathbb B)}  (\tau/2) a \| = \mathcal O(\tau^3),
\end{align}
where $a\in \mathbb B$, and $\mathcal S^{\mathrm{ex} } (\tau)$ is the exact solution operator
to \eqref{a0}. 

Despite the remarkable effectiveness of the scheme \eqref{a4}
(cf. \cite{WT16} for the case of Allen-Cahn equations),  there have been few rigorous
works addressing the stability and regularity of the Strang splitting solutions. 
The general assertions \eqref{a6}--\eqref{a8}  are hinged on nontrivial a priori estimates of
 the numerical iterates in various Banach spaces.  In a recent series of works
 \cite{LQ1a, LQ1b, LQ1c, LQ1d, L21}, we developed  a new theoretical framework
 to establish convergence,  stability and regularity of general operator splitting methods for a myriad
 of phase field models including the  Cahn-Hilliard equations, Allen-Cahn equations and the like.
 More pertinent to the discussion here is the recent work \cite{LQ1d} which settled the stability for a class of scalar-valued Allen-Cahn equations with  polynomial or  logarithmic potential nonlinearities.

In this work we develop further the program initiated in \cite{LQ1d} and analyze the Strang-splitting for two types of tensorial Allen-Cahn equations. 
The first model is the vector-valued Allen-Cahn equation
\begin{align}
\partial_t {\mathbf u} = \Delta \mathbf u + (1-|\mathbf u|^2) \mathbf u,
\quad \mathbf u\in \mathbb R^m, 
\end{align}
where $|\mathbf u|^2 = \sum_{i=1}^m u_i^2$ for $\mathbf u =(u_1,\cdots,u_m)^{\mathrm T}$
($m\ge 1$ is an integer),  and the Laplacian operator is applied to $\mathbf u$ component-wise. 
In particular, when $m=2$, the vector-valued Allen-Cahn equation is equivalent to the complex-valued Ginzburg–Landau model for superconductivity (cf. \cite{JT90,EH94} using a standard transformation $\psi(x) = e^{iA\cdot x} u(x)$ and identifying $u = u_1+i u_2$).
The second model is the matrix-valued Allen-Cahn equation:
\begin{align}
\partial_t U = \Delta U + U-UU^{\mathrm T}U,
\quad U\in \mathbb R^{m\times m},
\end{align}
where the Laplacian operator is applied to the matrix $U$ entry-wise. 
The matrix-valued Allen-Cahn is introduced in \cite{OW20} to find stationary points of the Dirichlet energy for orthogonal matrix-valued fields, that can be used in inverse problems in image analysis and  directional field synthesis problems etc.
For both models we shall develop the corresponding stability theory for the Strang-splitting approximation in the style of \eqref{a4}. 
Roughly speaking, our results can be summarized in the following table where $\tau$ is the splitting time step.
\begin{table}[!h]
\centering
\renewcommand\arraystretch{1.75}
\begin{tabular}{|l|l|l|}
\hline
\cline{1-3}
 $\;$ & $L^{\infty}$-stability  &   Modified energy dissipation\\
\hline
 Vector-valued AC  & $0<\tau<\infty$  &  $0<\tau<\infty$ \\
\hline
Matrix-valued AC &  $0<\tau<\infty$ & $m e^\tau (e^{2\tau}-1)\le 0.43$ \\
\hline
\cline{1-3}
\end{tabular}
\end{table}

We turn now to more precise formulation of the results. Our first result is on the vector-valued
Allen-Cahn equation. An interesting feature is that the nonlinear propagator still enjoys
a relatively simple explicit expression.  

\begin{thm}[Unconditional stability of Strang-splitting for the vector-valued AC] \label{thm1}
Suppose  $\Omega =[-\pi, \pi]^d$ is the $2\pi$-periodic 
$d$-dimensional torus in physical dimensions $d\le 3$. 
 Consider the vector-valued AC for $\mathbf u:\, [0,\infty)
\times \Omega \to \mathbb R^m$, $m\ge 1$:
\begin{align}
\begin{cases}
\partial_t \mathbf u =  \Delta \mathbf u + (1-|\mathbf u|^2) \mathbf u, 
\quad (t,x) \in (0,\infty) \times \Omega; \\
\mathbf u\Bigr|_{t=0} = \mathbf u^0,
\end{cases}
\end{align}
where $\mathbf u^0:\, \Omega \to \mathbb R^m$ is the initial data.
Define $\mathcal S_{\mathcal L}(t) = e^{t \Delta}$ and  for $\mathbf w\in
\mathbb R^{m}$, 
\begin{align}
\mathcal S_{\mathcal N}(t) \mathbf w \coloneqq 
\left((e^{2t}-1) |\mathbf w|^2+1\right)^{-\frac12} e^t
\mathbf w.
\end{align}
Define for $n\ge 0$ the Strang-splitting iterates
\begin{align}
\mathbf u^{n+1}  = \mathcal S_{\mathcal L}\left( \tau/2\right) \mathcal S_{\mathcal N}\left( \tau \right)\mathcal S_{\mathcal L}\left( \tau/2\right) \mathbf u^n.
\end{align}
The following hold.

\begin{enumerate}
\item \underline{The maximum principle}. For any $\tau>0$ and any $n\ge 0$, it holds that
\begin{align}
\|  |\mathbf u^{n+1}|  \|_{L_x^\infty} \le \max \{ 1, \; \| |\mathbf u^n| \|_{L_x^\infty} \}.
\end{align}
It follows that
\begin{align}
\sup_{n\ge 1} \| |\mathbf u^n| \|_{L_x^\infty} \le \max\{1, \; \| |\mathbf u^0| \|_{L_x^\infty} \}.
\end{align}
In  particular if $\| |\mathbf u^0| \|_{L_x^\infty} \le 1$, then
\begin{align}
\sup_{n\ge 1} \| |\mathbf u^n| \|_{L_x^\infty} \le 1.
\end{align}

\item \underline{Modified energy dissipation}. For any $\tau>0$, we have
\begin{align}
\widetilde E(\tilde {\mathbf u}^{n+1})\le \widetilde E(\tilde {\mathbf u}^{n}),\quad \forall\, n\ge 0.
\end{align}
Here 
\begin{align}
& \tilde{\mathbf u}^n = \mathcal S_{\mathcal L}\left( \tau/2\right)\mathbf u^n ; \\
&\widetilde E({\mathbf u}) =\int_\Omega \left( \frac{1}{2\tau}\left\langle (e^{-\tau \Delta} -1)  {\mathbf u},  {\mathbf u} \right\rangle + G({\mathbf u})  \right)\, dx;\\
& G(\mathbf u)  = \frac1{2\tau} |\mathbf u|^2 - \frac{e^\tau}{\tau(e^{2\tau}-1)} \left(\left(
1+(e^{2\tau}-1)|\mathbf u|^2\right)^{\frac12}-1\right).
\end{align}
In the above $\langle \mathbf a, \, \mathbf b \rangle = \sum_{i=1}^m a_i b_i$
for $\mathbf a = (a_1,\cdots, a_m)^{\mathrm T}$,
$\mathbf b =(b_1,\cdots, b_m)^{\mathrm T} \in \mathbb R^m$.
\end{enumerate}
\end{thm}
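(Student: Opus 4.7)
For Part (1), I would chain three pointwise bounds across one Strang iteration. The heat propagator $e^{s\Delta}$ does not increase $\||\mathbf u|\|_{L^\infty_x}$: since $|\mathbf u|^2$ is a subsolution of the scalar heat equation ($\partial_t |\mathbf u|^2 = \Delta|\mathbf u|^2 - 2|\nabla \mathbf u|^2 \le \Delta|\mathbf u|^2$), the scalar parabolic maximum principle applies. For the nonlinear propagator, the explicit formula gives
\[
|\mathcal S_{\mathcal N}(\tau)\mathbf w|^2 = \frac{e^{2\tau}|\mathbf w|^2}{(e^{2\tau}-1)|\mathbf w|^2+1},
\]
and a one-line monotonicity check on the scalar function $\phi(s) = e^{2\tau}s/((e^{2\tau}-1)s+1)$ shows $\phi(s) \le \max(1,s)$, giving $|\mathcal S_{\mathcal N}(\tau)\mathbf w| \le \max(1,|\mathbf w|)$ pointwise. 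Chaining the three bounds yields the per-step inequality, and induction on $n$ completes Part (1).

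For Part (2), set $\mathbf a = \tilde{\mathbf u}^n$ and $\mathbf b = \mathcal S_{\mathcal N}(\tau) \mathbf a$, so that $\tilde{\mathbf u}^{n+1} = e^{\tau\Delta}\mathbf b$. The descent is assembled from two ingredients. First, along the ODE $\partial_t \mathbf w = (1-|\mathbf w|^2)\mathbf w$ one computes
\[
\frac{d}{dt} G(\mathbf w) = \frac{|\mathbf w|^2 (1-|\mathbf w|^2)}{\tau} \Bigl(1 - \frac{e^\tau}{\sqrt{1+(e^{2\tau}-1)|\mathbf w|^2}}\Bigr) \le 0,
\]
since the two sign-bearing factors both vanish at $|\mathbf w|=1$ and have opposite signs elsewhere; integrating over $[0,\tau]$ yields the pointwise inequality $G(\mathbf b) \le G(\mathbf a)$. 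Second, the tangency identity $\langle e^{-\tau\Delta}\tilde{\mathbf u}^n, \tilde{\mathbf u}^n\rangle_{L^2} = \|\mathbf u^n\|_{L^2}^2$ (immediate from $\tilde{\mathbf u}^n = e^{(\tau/2)\Delta}\mathbf u^n$ and the self-adjointness of the heat semigroup) rewrites the energy in the compact form
\[
\widetilde E(\tilde{\mathbf u}^n) = \frac{\|\mathbf u^n\|_{L^2}^2}{2\tau} + \int_\Omega \Phi(\tilde{\mathbf u}^n)\,dx, \quad \Phi(\mathbf u) := -\frac{e^\tau}{\tau(e^{2\tau}-1)}\bigl(\sqrt{1+(e^{2\tau}-1)|\mathbf u|^2}-1\bigr),
\]
in which $\Phi$ is concave as a function of $\mathbf u\in\mathbb R^m$.

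Combining these ingredients, the target descent $\widetilde E(\tilde{\mathbf u}^{n+1}) \le \widetilde E(\tilde{\mathbf u}^n)$ reduces to the single Jensen-type inequality
\[
\int_\Omega \bigl[\Phi(e^{\tau\Delta}\mathbf b) - \Phi(\mathbf b)\bigr]\,dx \le \frac{1}{2\tau}\Bigl[\bigl(\|e^{-(\tau/2)\Delta}\mathbf a\|_{L^2}^2 - \|\mathbf a\|_{L^2}^2\bigr) + \bigl(\|\mathbf b\|_{L^2}^2 - \|e^{(\tau/2)\Delta}\mathbf b\|_{L^2}^2\bigr)\Bigr],
\]
whose right-hand side is manifestly non-negative and Fourier-diagonalizable. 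The main obstacle is controlling the left-hand side, which is the Jensen gap for the concave functional $\int\Phi$ under heat-kernel smoothing; raw Jensen yields the inequality in the wrong direction. To produce the needed upper bound I would apply the concavity secant $\Phi(e^{\tau\Delta}\mathbf b) - \Phi(\mathbf b) \le \nabla\Phi(\mathbf b)\cdot(e^{\tau\Delta}\mathbf b - \mathbf b)$ and exploit the fortunate telescoping
\[
\nabla \Phi(\mathbf b) = -\frac{e^{2\tau}\mathbf a}{\tau\sqrt{1+(e^{4\tau}-1)|\mathbf a|^2}},
\]
obtained by substituting $\mathbf b = e^\tau\mathbf a/\sqrt{1+(e^{2\tau}-1)|\mathbf a|^2}$ and simplifying. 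After integration by parts via self-adjointness of $e^{\tau\Delta}$, the inequality reduces to a spectral/Fourier estimate paralleling the scalar analysis of \cite{LQ1d}; the vector case transfers cleanly because the dynamics and the energy density depend on $\mathbf u$ only through $|\mathbf u|^2$.
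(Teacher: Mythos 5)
Your Part (1) is correct and is a legitimate alternative to the paper's argument. The paper handles the linear propagator by duality, writing $|\mathbf v| = \sup_{|\tilde{\mathbf v}|\le 1}\langle\mathbf v,\tilde{\mathbf v}\rangle$ and using positivity and total mass one of the periodic heat kernel; you instead observe that $|\mathbf u|^2$ is a subsolution of the scalar heat equation and invoke the parabolic maximum principle. Both work, though the kernel/duality argument applies directly to merely measurable data whereas the subsolution argument strictly speaking needs an approximation step. Your treatment of $\mathcal S_{\mathcal N}$ via the scalar function $\phi(s) = e^{2\tau}s/((e^{2\tau}-1)s+1)$ is the same as the paper's.

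Part (2) has a genuine gap. Your preliminary observations are all sound: the tangency identity $\widetilde E(\tilde{\mathbf u}^n)=\frac{1}{2\tau}\|\mathbf u^n\|_{L^2}^2+\int\Phi(\tilde{\mathbf u}^n)$, the pointwise monotonicity $G(\mathbf b)\le G(\mathbf a)$ along the ODE, the concavity of $\Phi$, and the "telescoping" formula for $\nabla\Phi(\mathbf b)=-\tau^{-1}\mathcal S_{\mathcal N}(2\tau)\mathbf a$ are all correct. But the reduction you end with,
\begin{align*}
\int_\Omega \nabla\Phi(\mathbf b)\cdot\bigl(e^{\tau\Delta}\mathbf b - \mathbf b\bigr)\,dx
\;\le\;\frac{1}{2\tau}\Bigl[\|e^{-\tau\Delta/2}\mathbf a\|_{L^2}^2-\|\mathbf a\|_{L^2}^2
+\|\mathbf b\|_{L^2}^2-\|e^{\tau\Delta/2}\mathbf b\|_{L^2}^2\Bigr],
\end{align*}
is not a spectral/Fourier estimate. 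The left-hand side equals $\tau^{-1}\langle\mathcal S_{\mathcal N}(2\tau)\mathbf a,(1-e^{\tau\Delta})\mathcal S_{\mathcal N}(\tau)\mathbf a\rangle_{L^2}$: it pairs a \emph{nonlinear} pointwise transform of $\mathbf a$ with a Fourier multiplier acting on \emph{another} nonlinear transform of $\mathbf a$. A Cauchy--Schwarz in the $(1-e^{\tau\Delta})^{1/2}$-weighted norm reproduces the $\|\mathbf b\|^2-\|e^{\tau\Delta/2}\mathbf b\|^2$ term on the right, but leaves you needing $\|(1-e^{\tau\Delta})^{1/2}\mathcal S_{\mathcal N}(2\tau)\mathbf a\|^2\lesssim\|(e^{-\tau\Delta}-1)^{1/2}\mathbf a\|^2$, which does not follow from Fourier diagonalization because $\mathcal S_{\mathcal N}(2\tau)$ is not a Fourier multiplier and can redistribute spectral mass. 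Nothing in the scalar analysis of \cite{LQ1d} produces such a cross-estimate, and this is precisely the obstacle your last sentence waves past.

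The paper avoids the issue by never separating the ODE descent from the heat-smoothing step and never invoking $G(\mathbf b)\le G(\mathbf a)$. Instead it rewrites the whole Strang map in the gradient-descent form
\begin{align*}
\frac{1}{\tau}\bigl(e^{-\tau\Delta}-1\bigr)\tilde{\mathbf u}^{n+1}
+\frac{1}{\tau}\bigl(\tilde{\mathbf u}^{n+1}-\tilde{\mathbf u}^{n}\bigr)
=-\bigl(\nabla G\bigr)\bigl(\tilde{\mathbf u}^{n}\bigr),
\end{align*}
tests against $\tilde{\mathbf u}^{n+1}-\tilde{\mathbf u}^n$, and applies Lemma~\ref{s8}, which is a one-sided second-order Taylor bound for $G$ coming from $\nabla^2 G\preceq\frac{1}{\tau}\mathrm{I}$ (equivalently, $\Phi$ concave). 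The key structural point you miss is that the concavity secant should be applied to $G$ at $\tilde{\mathbf u}^n$ in the direction of the actual increment $\tilde{\mathbf u}^{n+1}-\tilde{\mathbf u}^n$ (so that the quadratic remainder $\frac{1}{2\tau}|\tilde{\mathbf u}^{n+1}-\tilde{\mathbf u}^n|^2$ is absorbed by the friction term), not to $\Phi$ at $\mathbf b$ in the direction $e^{\tau\Delta}\mathbf b-\mathbf b$. With the former choice the positivity of the multiplier $e^{\tau|k|^2}-1$ closes the argument immediately; with yours it does not.
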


\begin{rem}
The significance of the uniform stability result obtained in Theorem \ref{thm1} is that it leads to 
all higher Sobolev estimates as well as convergence.  For example, by using the techniques
developed in \cite{LQ1d}, we can show uniform Sobolev bounds. Namely if  $\mathbf 
u^0 \in H^{k_0}(\Omega, \, \mathbb R^m)$ for some
$k_0\ge 1$, then
\begin{align}
\sup_{n\ge 1} \| \mathbf u^n \|_{H^{k_0}} \le C_1,
\end{align}
where $C_1>0$ depends only on ($k_0$, $d$, $\|u^0 \|_{H^{k_0}}$, $m$). 
Moreover for any $k\ge k_0$, we have
\begin{align}
\sup_{n\ge \frac {1} {\tau} } \| \mathbf u^n \|_{H^{k}}
\le C_2,
\end{align}
where $C_2>0$ depends only on ($m$, $k$, $k_0$, $d$, $\| u^0 \|_{H^{k_0} }$).
Let $\mathbf u^{\mathrm {ex}}$ be the exact solution to the vector-valued Allen-Cahn
equation corresponding to initial data $\mathbf u^0$.
If we assume $\mathbf u^0$ has high regularity (e.g. $\mathbf u^0\in H^{k_0}$
for some sufficiently large $k_0$), then for any $T>0$, it holds that
\begin{align} 
\sup_{n\ge 1, n\tau \le T}  \| \mathbf u^n - \mathbf u^{\mathrm {ex}} (n\tau, \cdot ) \|_{L^2(\Omega)}
\le C \cdot \tau^2,
\end{align}
where $C>0$ depends on ($\mathbf u^0$, $T$, $d$, $m$).
\end{rem}
\begin{rem}
The modified energy for $\mathbf u^n$ has a close connection with the standard energy 
$E^{\mathrm{st}}(\mathbf u^n)$ defined by
\begin{align}
E^{\mathrm {st}} (\mathbf u^n) = \int_{\Omega}
\left( \frac 12 | \nabla \mathbf u^n|^2 +
\frac 14 ( | \mathbf u^n|^2 -1)^2 -\frac 14 \right) \,dx.
\end{align}
Note that here  the integrand of $E^{\mathrm{st}}(\cdot)$ includes a harmless constant $-\frac 14$. 
If $\mathbf u^0 \in H^{k_0}(\Omega, \mathbb R^m)$ for sufficiently
large $k_0$, then  for $0<\tau\le 1$, we have
\begin{align} 
\sup_{n\ge 0} | \widetilde E^n - E^{\mathrm{st}}(\mathbf u^n ) | \le C_3 \tau,
\end{align}
where $C_3>0$ depends only on ($d$, $m$, $\mathbf u^0$). This result can be proved
by using the uniform Sobolev estimates established in the preceding remark. We omit
the elementary argument here for simplicity.
\end{rem}

In recent work \cite{OW20}, Osting and Wang considered the minimization problem 
\begin{align}
 \min_{ A\in H^1(\Omega, O_m)} \frac 12 \int_{\Omega} \| \nabla A \|_F^2 \,dx,
 \end{align}
 where $O_m\subset \mathbb R^{m\times m}$ is the group of orthogonal matrices, and the gradient
 is taken as the usual sense when $A$ is regarded as a matrix-valued function in $\mathbb R^{m\times m}$, i.e. \emph{not} the covariant derivative sense in e.g. \cite{BB14}. 
For a matrix $A, B\in \mathbb R^{m\times m}$, we use the usual Frobenius norm and Frobenius
inner product:
\begin{align}
&\|A \|_F^2 = \sum_{i,j=1}^m A_{ij}^2,  \quad
\langle A, B \rangle_F = \sum_{i,j=1}^m A_{ij} B_{ij}.
\end{align}
To enforce the hard constraint $A \in H^1(\Omega, O_m)$,  one can employ two relaxed 
functionals parametrized by $0<\epsilon\ll 1$:
\begin{align} 
&\text{Model 1}: \quad  \min_{A\in H^1(\Omega, \mathbb R^{m\times m})}
\int_{\Omega} \left( \frac 12 \| \nabla A \|_F^2 + \frac 1{2\epsilon^2}
\mathrm{dist}^2(O_n, A) \right) \,dx;  \label{a20} \\
&\text{Model 2}: \quad  \min_{A\in H^1(\Omega, \mathbb R^{m\times m})}
\int_{\Omega} \left( \frac 12 \| \nabla A \|_F^2 + \frac 1{4\epsilon^2}
\| A^{\mathrm T} A - \mathrm{I}_m \|_F^2  \right) \,dx. \label{a21}
\end{align}
As shown in \cite{OW20}, these in turn lead to the following gradient flows
\begin{align}
&\text{Model 1}: \quad \partial_t A =  \Delta A - \epsilon^{-2}
U(\Sigma - \mathrm{I}_m) V^{\mathrm T}; \label{a22} \\
&\text{Model 2}: \quad 
\partial_t A = \Delta A - \epsilon^{-2} U (\Sigma^2 - \mathrm I_m)\Sigma V^{\mathrm T},
\end{align}
where $A=U \Sigma V^{\mathrm T}$ is the singular value decomposition of the nonsingular matrix $A$.
The gradient flow in Model 2 can be further simplified as
\begin{align} \label{a23}
\partial_t A = \Delta A - \epsilon^{-2} A (A^{\mathrm T} A - \mathrm I_m).
\end{align}
In \cite{OW20}, the authors introduced an energy-splitting method to find local minima 
of \eqref{a20} and \eqref{a21}. These are nontrivial stationary solutions other than the trivial constant orthogonal matrix-valued function) of \eqref{a22} and \eqref{a23}. The method 
can be rephrased as the following operator-splitting:
\begin{align} \label{a25}
U^{n+1} = \mathcal S_{\mathcal N}^{\mathrm{Proj} } \mathcal S_{\mathcal L}(\tau) U^n,
\end{align}
where $S_{\mathcal L}(\tau) = e^{\tau \Delta}$ is applied to the matrix entry-wise, and
\begin{align}
\Bigl( \mathcal S_{\mathcal N}^{\mathrm{Proj} } A \Bigr)(x)= U(x) V^{\mathrm T}(x),
\qquad \text{if $A(x) = U(x) \Sigma(x) V^{\mathrm T}(x)$}.
\end{align}

In this work, we take a direct approach to \eqref{a23} and employ a Strang-splitting method
to solve \eqref{a23} efficiently and accurately.  For simplicity of presentation we shall
take $\epsilon=1$ in \eqref{a23}.  We have the following theorem.

\begin{thm}[Stability for matrix-valued AC] \label{thm2}
Suppose  $\Omega =[-\pi, \pi]^d$ is the $2\pi$-periodic 
$d$-dimensional torus in physical dimensions $d\le 3$. 
 Consider the matrix-valued AC for $U:\, [0,\infty)
\times \Omega \to \mathbb R^{m\times m}$, $m\ge 1$:
\begin{align}
\begin{cases}
\partial_t U=  \Delta U + U - U U^{\mathrm T} U, 
\quad (t,x) \in (0,\infty) \times \Omega; \\
U \Bigr|_{t=0} = U^0,
\end{cases}
\end{align}
where $U^0:\, \Omega \to \mathbb R^{m\times m}$ is the initial data.
Define $\mathcal S_{\mathcal L}(t) = e^{t \Delta}$ and  for $A\in
\mathbb R^{m\times m}$, 
\begin{align}
\mathcal S_{\mathcal N}(t) A\coloneqq \left((e^{2t}-1) A A^{\mathrm T} + I\right)^{-\frac12} e^t A.
\end{align}
Define for $n\ge 0$ the Strang-splitting iterates
\begin{align}
U^{n+1}  = \mathcal S_{\mathcal L}\left( \tau/2\right) \mathcal S_{\mathcal N}\left( \tau \right)\mathcal S_{\mathcal L}\left( \tau/2\right) U^n.
\end{align}
The following hold.
\begin{enumerate}
\item \underline{The maximum principle}. For any $\tau>0$ and any $n\ge 0$, it holds that
\begin{align}
\|  \|U^{n+1}\|_F  \|_{L_x^\infty} \le \max \{ \sqrt m, \; \| \| U^n\|_F \|_{L_x^\infty} \}.
\end{align}
It follows that
\begin{align}
\sup_{n\ge 1} \| \| U^n\|_F \|_{L_x^\infty} \le \max\{\sqrt m, \; \| \| U^0\|_F \|_{L_x^\infty} \}.
\end{align}
In  particular if $\| \|U^0\|_F \|_{L_x^\infty} \le \sqrt m$, then
\begin{align}
\sup_{n\ge 1} \| \| U^n\|_F \|_{L_x^\infty} \le \sqrt m.
\end{align}

\item \underline{Modified energy dissipation for small time}. 
Assume $\| \| U^0\|_F \|_{L_x^{\infty}} \le \sqrt m$.
If $\tau>0$ satisfies 
\begin{align}
m e^\tau (e^{2\tau}-1)\le 0.43,
\end{align}
  then 
\begin{align}
\widetilde E(\tilde {U}^{n+1})\le \widetilde E(\tilde {U}^{n}),\quad \forall\, n\ge 0.
\end{align}
Here
\begin{align}
& \tilde{U}^n = \mathcal S_{\mathcal L}\left( \tau/2\right)U^n ; \\
&\widetilde E({U}) =\int_\Omega \frac{1}{2\tau}\left\langle (e^{-\tau \Delta} -1)  {U},  {U} \right\rangle_F + \left\langle G({U}),I\right\rangle_F \, dx;\\
& G(U)  = \frac1{2\tau} U U^{\mathrm T} - \frac{e^\tau}{\tau(e^{2\tau}-1)} \left(\left(\mathrm I+(e^{2\tau}-1)U U^{\mathrm T}\right)^{\frac12}-\mathrm I\right).
\end{align}
In the above $\langle A,B \rangle_F = \mathrm{Tr}(A^{\mathrm T}B)
= \sum_{i,j} A_{ij} B_{ij} $ denotes the usual Frobenius inner product.
\end{enumerate}
\end{thm}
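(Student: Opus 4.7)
The plan is to prove the two parts in sequence, adapting the scalar strategy of \cite{LQ1d} with additional matrix-analytic tools.

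For the maximum principle, I handle the two half-steps separately. The linear half-step $\mathcal S_{\mathcal L}(\tau/2)=e^{(\tau/2)\Delta}$ acts entry-wise as convolution with the positive unit-mass heat kernel, so Jensen's inequality for the convex function $t\mapsto t^2$ gives $\|(e^{(\tau/2)\Delta}U)(x)\|_F^2 \le (e^{(\tau/2)\Delta}\|U\|_F^2)(x)$, and $\|\|U\|_F\|_{L_x^\infty}$ is non-increasing under this step. For the nonlinear half-step, the matrix function $(I+(e^{2\tau}-1)AA^T)^{-1/2}$ is a function of $AA^T$, so for any SVD $A=P\Sigma Q^T$ one obtains the explicit formula
\[
\mathcal S_{\mathcal N}(\tau) A \;=\; P\, \mathrm{diag}\!\left( \frac{e^{\tau}\sigma_i}{\sqrt{1+(e^{2\tau}-1)\sigma_i^{2}}} \right) Q^T,
\]
and hence $\|\mathcal S_{\mathcal N}(\tau)A\|_F^2=\sum_i g(\sigma_i^2)$ with $g(t):=e^{2\tau}t/\bigl((e^{2\tau}-1)t+1\bigr)$. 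A direct computation shows that $g$ is strictly concave on $[0,\infty)$ with $g(0)=0$, $g(1)=1$, and $g(t)\le t$ for $t\ge 1$. Jensen's inequality for the concave $g$ then gives $\sum_i g(\sigma_i^2)\le m\,g\bigl(\|A\|_F^2/m\bigr)$, and splitting into the cases $\|A\|_F^2\le m$ versus $\|A\|_F^2>m$ yields the sharp bound $\|\mathcal S_{\mathcal N}(\tau)A\|_F\le\max\bigl(\sqrt m,\|A\|_F\bigr)$.

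For the modified energy, substitute $\tilde U^n:=\mathcal S_{\mathcal L}(\tau/2)U^n$ so that the iteration becomes the Lie form $\tilde U^{n+1}=\mathcal S_{\mathcal L}(\tau)\mathcal S_{\mathcal N}(\tau)\tilde U^n$; set $W:=\mathcal S_{\mathcal N}(\tau)\tilde U^n$ and decompose $\widetilde E=E_L+E_N$ with $E_L(V)=\tfrac{1}{2\tau}\int\langle(e^{-\tau\Delta}-1)V,V\rangle_F dx$ (nonlocal, quadratic) and $E_N(V)=\int\langle G(V),I\rangle_F dx$ (pointwise, non-quadratic). For the nonlinear half-step the SVD factors of $\tilde U^n$ and $W$ coincide and only the singular values evolve, so $E_N(W)-E_N(\tilde U^n)=\int\sum_i[h(g(\sigma_i^2))-h(\sigma_i^2)]dx$ with scalar profile $h(t)=\tfrac{t}{2\tau}-\tfrac{e^\tau}{\tau(e^{2\tau}-1)}\bigl(\sqrt{1+(e^{2\tau}-1)t}-1\bigr)$; since $g(t)-1=(t-1)/\bigl(1+(e^{2\tau}-1)t\bigr)$ shows $g(t)$ lies between $1$ and $t$, and $h$ is convex with its unique minimum at $t=1$, the integrand is non-positive pointwise, giving $E_N(W)\le E_N(\tilde U^n)$ for any $\tau>0$. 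For the linear half-step, Fourier diagonalization produces the clean quadratic dissipation identity
\[
E_L(e^{\tau\Delta}W)-E_L(W) \;=\; -\,\frac{1}{2\tau}\sum_{k}\frac{(e^{\tau|k|^2}-1)^{2}(e^{\tau|k|^2}+1)}{e^{2\tau|k|^2}}\,|\hat W(k)|_F^2 \;\le\; 0.
\]

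The main obstacle is to control the two mixed cross terms $E_L(W)-E_L(\tilde U^n)$ (change of the nonlocal quadratic energy under a pointwise nonlinear step) and $E_N(e^{\tau\Delta}W)-E_N(W)$ (change of the non-quadratic matrix energy under heat smoothing), neither of which admits a pointwise scalar identity because the matrix square root inside $G$ does not commute with convolution by the heat kernel. I plan to handle them by combining operator monotonicity of $(\cdot)^{1/2}$, operator concavity of $X\mapsto\mathrm{Tr}(I+aX)^{1/2}$, the operator Jensen inequality for the unital positive map $e^{\tau\Delta}$, and the pointwise operator inequality $(e^{\tau\Delta}W)(e^{\tau\Delta}W)^T\le e^{\tau\Delta}(WW^T)$ coming from positivity of the heat kernel. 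A careful accounting that invokes the maximum principle to cap $\sum_i\sigma_i^2\le m$ pointwise produces a prefactor of order $m\,e^\tau(e^{2\tau}-1)$ in front of the non-dissipative contributions; the explicit threshold $m e^\tau(e^{2\tau}-1)\le 0.43$ is then precisely what is needed to absorb these cross terms into the quadratic dissipation above and to conclude $\widetilde E(\tilde U^{n+1})\le\widetilde E(\tilde U^n)$.
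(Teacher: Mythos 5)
Part (1) of your argument is fine and is essentially the paper's alternative proof of its Theorem~3.5: the entry-wise heat kernel bound via positivity and unit mass, and the SVD/Jensen argument using concavity of $g(t)=e^{2\tau}t/\bigl((e^{2\tau}-1)t+1\bigr)$, are exactly what appears in the remark after Theorem~3.5 (the paper's primary argument is an ODE comparison for $\alpha(t)=\mathrm{Tr}(UU^{\mathrm T})$, but the SVD route is equivalent).

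Part (2) has a genuine gap. You split $\widetilde E=E_L+E_N$ and the Lie step $\tilde U^{n+1}=\mathcal S_{\mathcal L}(\tau)\,\mathcal S_{\mathcal N}(\tau)\,\tilde U^n$ into two sub-steps, correctly prove $E_L(e^{\tau\Delta}W)\le E_L(W)$ and $E_N(W)\le E_N(\tilde U^n)$, and then explicitly acknowledge that the two remaining cross terms $E_L(W)-E_L(\tilde U^n)$ and $E_N(e^{\tau\Delta}W)-E_N(W)$ are the ``main obstacle''. The plan you sketch to absorb them — operator monotonicity of $(\cdot)^{1/2}$, operator concavity of $X\mapsto\mathrm{Tr}\,(I+aX)^{1/2}$, operator Jensen for the unital positive map $e^{\tau\Delta}$ — is not carried out and does not obviously close. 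In particular $E_L(W)-E_L(\tilde U^n)$ is a difference of a non-local quadratic form under a \emph{pointwise} nonlinear map whose Lipschitz constant exceeds one near $A=0$ (it is $e^\tau$ there), so this term can be strictly positive; and the scalar profile in $G$ is neither convex nor concave in $U$ (already in one dimension $G''(0)=\tfrac{1}{\tau}(1-e^\tau)<0$ while $G''\to\tfrac1\tau>0$), so a one-sided Jensen bound for $E_N(e^{\tau\Delta}W)-E_N(W)$ is not available. Nothing in the proposal shows these two cross contributions are dominated by the two dissipative ones, and the appearance of the threshold $m\,e^\tau(e^{2\tau}-1)\le 0.43$ is asserted rather than derived.

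The paper's proof avoids this decomposition entirely. It rewrites the exact relation $e^{-\tau\Delta}\tilde U^{n+1}=\mathcal S_{\mathcal N}(\tau)\tilde U^n$ as
\begin{align}
\frac1\tau\bigl(e^{-\tau\Delta}-1\bigr)\tilde U^{n+1}+\frac1\tau\bigl(\tilde U^{n+1}-\tilde U^n\bigr)
=\frac1\tau\Bigl(\mathcal S_{\mathcal N}(\tau)\tilde U^n-\tilde U^n\Bigr),
\end{align}
pairs both sides in $L^2$-Frobenius against the increment $\tilde U^{n+1}-\tilde U^n$, uses positivity of $e^{-\tau\Delta}-1$ to telescope the quadratic part, and then handles the right-hand side by the matrix Taylor/convexity Lemma~\ref{lemt0}: for $h(s)=\mathrm{Tr}\,G(\tilde U^n+s(\tilde U^{n+1}-\tilde U^n))$ it identifies $-h'(0)$ with the pairing in question and bounds $h''$ uniformly using the maximum principle, which is exactly where the constraint $m\,e^\tau(e^{2\tau}-1)\le\epsilon_0$ with $(1-\epsilon_0)^{-3/2}\epsilon_0\le 1$ (so $\epsilon_0=0.43$) enters. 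If you want to repair your approach you should abandon the $E_L/E_N$ plus two-substep bookkeeping and instead supply the analogue of Lemma~\ref{lemt0}, i.e.\ a uniform bound on the second $s$-derivative of $\mathrm{Tr}\,G(\tilde U^n+sH)$ for $H=\tilde U^{n+1}-\tilde U^n$; the non-commutativity is handled there by a power-series/Frobenius sub-multiplicativity estimate (Lemma~\ref{lemt0.1}) together with the integral representation $B^{1/2}=\Gamma(1/2)^{-1}\int_0^\infty e^{-tB}t^{-1/2}\,dt$ for the first derivative (Lemma~\ref{lemt0.2}).
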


\begin{rem}
We should point it out that the dynamics of the matrix-valued Allen-Cahn case are in general
qualitatively different from the vector-valued Allen-Cahn case. In particular there does not
appear to exist a simple procedure such that the vector-valued
AC model can be embedded into the matrix-valued  AC model.  A very tempting idea is
to consider the following system
\begin{align}
\begin{cases}
\partial_t U = \Delta U +U  - UU^{\mathrm T} U, \\
U\Bigr|_{t=0} = U^0 =\mathbf a {\mathbf a}^{\mathrm T},
\end{cases}
\end{align}
where $\mathbf a:\, \Omega \to \mathbb R^m$.  In yet other words, we consider the matrix-valued
AC model with rank one initial data. It is natural to speculate that $U$ is connected
with the solution to
\begin{align}
\begin{cases}
\partial_t \mathbf u = \Delta \mathbf u  + \mathbf u -|\mathbf u |^2 \mathbf u, \\
\mathbf u \Bigr|_{t=0} = \mathbf a.
\end{cases}
\end{align}
However one can check that  $U \ne \mathbf u {\mathbf u}^{\mathrm T}$ for $t>0$. 
The main reason is that 
\begin{align}
e^{t\Delta} \Bigl( \mathbf a {\mathbf a}^{\mathrm T}  \Bigr)
\ne {e^{t\Delta} \mathbf a } ( {e^{t\Delta} \mathbf a} )^{\mathrm T}.
\end{align}
If one drops the Laplacian and adopt only the nonlinear evolution, then one can show that $U = {\mathbf u} {\mathbf u}^{\mathrm T} $.
\end{rem}

The rest of this article is organized as follows. In Section 2 we carry out the proof of 
Theorem \ref{thm1}. In Section 3 we analyze the Strang-splitting for the matrix-valued
Allen-Cahn equation. In Section 4 we showcase a few numerical simulations for the
vector-valued Allen-Cahn and the matrix-valued Allen-Cahn equations. 

\section{Vector-valued Allen-Cahn}\label{sect2}
In this section we give the proof of Theorem \ref{thm1}. We consider the vector-valued Allen-Cahn equation for
$\mathbf u=\mathbf u(t, x): \, [0,\infty) \times \Omega \to \mathbb R^{m}$, $m\ge 1$:
\begin{align}\label{eq:2.1}
\begin{cases}
\partial_t \mathbf u = \Delta \mathbf u + \mathbf u - |\mathbf u|^2 \mathbf u,
\qquad (t, x ) \in (0,\infty) \times \Omega;  \\
\mathbf u \Bigr|_{t=0} =\mathbf u^0, \quad x \in \Omega.
\end{cases}
\end{align}
Here $|\mathbf u|^2$ is the usual $l^2$ norm, i.e. $|\mathbf u|^2 = \sum_{i=1}^m
 u_i^2$ for $\mathbf u = (u_1, \cdots, u_m)^{\mathrm T}$. 
The spatial domain $\Omega=[-\pi, \, \pi]^d$ is the $2\pi$-periodic torus in physical
dimensions $d\le 3$.  

We proceed 
in several steps. 

\subsection{Properties of $\mathcal S_{\mathcal L}$ and $ \mathcal S_{\mathcal N}$}
We first consider the pure nonlinear evolution. This is driven by the following ODE system
written for $\mathbf u = \mathbf u (t): \, [0,\infty) \to \mathbb R^m$.
\begin{align} \label{s1}
\begin{cases}
\frac d {dt} {\mathbf u} = (1-|\mathbf u|^2) \mathbf u, \\
\mathbf u \Bigr|_{t=0} =\mathbf a \in \mathbb R^m.
\end{cases}
\end{align}
\begin{prop}[The explicit nonlinear propagator $\mathcal S_{\mathcal N}(t)$]
Given $\mathbf a\in \mathbb R^{m}$, the unique smooth
 solution $U(t)$ to \eqref{s1} is given by 
\begin{align} \label{s2}
\mathcal S_{\mathcal N}(t) \mathbf a:= U(t) = \left((e^{2t}-1) |\mathbf a|^2+1\right)^{-\frac12} e^t 
\mathbf a,\qquad t>0.
\end{align}
\end{prop}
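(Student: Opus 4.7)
The plan is to exploit the radial structure of the vector field $(1-|\mathbf u|^2)\mathbf u$. Since the right-hand side of \eqref{s1} is always a scalar multiple of $\mathbf u$, the direction of $\mathbf u$ is preserved by the flow, and the solution must take the form $\mathbf u(t) = f(t)\mathbf a$ for some scalar $f$ with $f(0)=1$. It therefore suffices to determine $f$.

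The key step is to set $\rho(t) = |\mathbf u(t)|^2$ and differentiate:
\begin{align*}
\dot\rho = 2\langle \mathbf u, \dot{\mathbf u}\rangle = 2\rho(1-\rho), \qquad \rho(0) = |\mathbf a|^2.
\end{align*}
This scalar logistic ODE is integrable by separation of variables using the partial fraction decomposition $\tfrac{1}{\rho(1-\rho)} = \tfrac{1}{\rho}+\tfrac{1}{1-\rho}$, yielding
\begin{align*}
\rho(t) = \frac{|\mathbf a|^2 e^{2t}}{1+(e^{2t}-1)|\mathbf a|^2}.
\end{align*}
Combining this with the ansatz $\mathbf u(t) = f(t)\mathbf a$ and noting that $f(t)\ge 0$ (since $f$ is continuous, $f(0)=1$, and $f$ cannot vanish because $\mathbf u \equiv \mathbf 0$ is the only trajectory through the origin by uniqueness), one reads off
\begin{align*}
f(t) = \frac{e^t}{\bigl(1+(e^{2t}-1)|\mathbf a|^2\bigr)^{1/2}},
\end{align*}
which is precisely \eqref{s2}. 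The degenerate case $\mathbf a = \mathbf 0$ and the case $|\mathbf a|=1$ (where the solution is constant) are both handled by direct inspection.

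For the ``unique smooth'' clause, uniqueness on small intervals is immediate because the vector field $(1-|\mathbf u|^2)\mathbf u$ is a polynomial and hence locally Lipschitz, so the Picard--Lindel\"of theorem applies; global existence follows from the explicit formula, or alternatively from the a priori bound $\dot\rho \le 2\rho$ which rules out blow-up. I do not expect any real obstacle here: the only mildly delicate point is the positivity/direction argument for $f$, which is routine. As a safety net I would substitute \eqref{s2} back into \eqref{s1} and verify it pointwise, which gives a completely elementary alternative proof bypassing the reduction to the logistic equation.
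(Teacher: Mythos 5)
Your proof is correct and follows essentially the same route as the paper: both arguments first derive and solve the scalar logistic ODE for $\rho(t)=|\mathbf u(t)|^2$, then recover $\mathbf u$. The only cosmetic difference is in the final step — you invoke the radial ansatz $\mathbf u(t)=f(t)\mathbf a$ and argue $f\ge 0$ to extract the square root, whereas the paper substitutes $\rho(t)$ back into \eqref{s1} to obtain the linear equation $\dot{\mathbf u}=\frac{1-|\mathbf a|^2}{(e^{2t}-1)|\mathbf a|^2+1}\mathbf u$ and integrates it directly, sidestepping the sign discussion.
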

\begin{rem}
If $\mathbf a$ is a vector-valued function, i.e. $\mathbf a:\, \Omega \to \mathbb R^{m}$, then
we naturally extend the definition of $\mathcal S_{\mathcal N}(t) \mathbf a$ as
\begin{align}
\Bigl(  \mathcal S_{\mathcal N} (t) \mathbf a \Bigr)(x) = \mathcal S_{\mathcal N} (t) (\mathbf a(x)),
\quad x \in \Omega.
\end{align}
This convention will be used without explicit mentioning.
\end{rem}
\begin{proof}
Taking the $l^2$-inner product on both sides of \eqref{s1} gives us 
\begin{align}
\frac 12 \frac d {dt}  |{\mathbf u}|^2 = (1-|\mathbf u|^2) |{\mathbf u}|^2.
\end{align}
This is an ODE for $|\mathbf u|^2$ which has an explicit solution:
\begin{align} \label{s3}
|\mathbf u(t)|^2 = \frac{e^{2t} |\mathbf a|^2}{(e^{2t}-1)|\mathbf a|^2+1}.
\end{align}
Plugging the above into \eqref{s1}, we obtain
\begin{align}
\frac d {dt} {\mathbf u} = \frac{1-|\mathbf a|^2}{(e^{2t}-1)|\mathbf a|^2+1}\mathbf u.
\end{align}
It is not difficult to work out the explicit solution as
\begin{align}
\mathbf u(t) = \frac{e^t\mathbf a}{\left((e^{2t}-1)|\mathbf a|^2+1\right)^{\frac12}}.
\end{align}
\end{proof}

Given $\mathbf u=(u_1,\cdots, u_m)^{\mathrm T}:\, \Omega \to \mathbb R^{m}$ and $t>0$, we define 
the linear propagator 
\begin{align}
\Bigl( \mathcal S_{\mathcal L} (t) \mathbf u \Bigr)_{i} (x) = ( e^{t\Delta} u_{i} )(x),
\qquad i=1,\cdots,m.
\end{align}
In yet other words, the operator $\mathcal S_{\mathcal L}(t) = e^{t\Delta}$ is applied to the 
vector $\mathbf u$ entry-wise.

\begin{thm}[Maximum principle for $\mathcal S_{\mathcal L}$ and $\mathcal S_{\mathcal N}$]
\label{s5}
Let $\Omega=[-\pi, \pi]^d$ be the $2\pi$-periodic $d$-dimensional torus. 
For any $\tau>0$,  the following hold.
\begin{enumerate}
\item For any measurable vector-valued $\mathbf a:\, \Omega \to \mathbb R^{m}$, we have
\begin{align} \label{s6}
& \|  | \mathcal S_{\mathcal L}(\tau) \mathbf a | \|_{L_x^{\infty} } \le 
 \| | \mathbf a(x) | \|_{L_x^{\infty} }.
\end{align}

\item  For any $\mathbf w\in \mathbb R^{m}$, we have
\begin{align} \label{s7}
| \mathcal S_{\mathcal N}(\tau) \mathbf w | \le \max\{1,\, |\mathbf w| \}.
\end{align}
\end{enumerate}
\end{thm}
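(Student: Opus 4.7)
My plan is to handle (1) and (2) separately since they rely on entirely different mechanisms. For (1), the key tool is the representation of $\mathcal S_{\mathcal L}(\tau)=e^{\tau\Delta}$ on the torus as convolution with the periodic heat kernel $K_\tau$, which is nonnegative and satisfies $\int_\Omega K_\tau(y)\,dy=1$. Applied entry-wise, $(\mathcal S_{\mathcal L}(\tau)\mathbf a)_i(x) = \int_\Omega K_\tau(x-y)\,a_i(y)\,dy$. Viewing $K_\tau(x-\cdot)\,dy$ as a probability measure and applying Jensen's inequality with the convex function $z\mapsto z^2$ componentwise, then summing over $i=1,\ldots,m$, yields
\[
|\mathcal S_{\mathcal L}(\tau)\mathbf a|^2(x)\le \int_\Omega K_\tau(x-y)\,|\mathbf a(y)|^2\,dy \le \||\mathbf a|\|_{L_x^\infty}^2,
\]
which is exactly \eqref{s6}.

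For (2), my strategy is purely algebraic via the closed-form expression \eqref{s3} derived in the preceding proposition, namely
\[
|\mathcal S_{\mathcal N}(\tau)\mathbf w|^2=\frac{e^{2\tau}|\mathbf w|^2}{(e^{2\tau}-1)|\mathbf w|^2+1}.
\]
Setting $r=|\mathbf w|^2$ and $f(r):=e^{2\tau}r/((e^{2\tau}-1)r+1)$, a direct check shows $f(1)=1$ and $f'(r)>0$ on $[0,\infty)$. If $r\le 1$, monotonicity gives $f(r)\le f(1)=1$; if $r\ge 1$, the inequality $f(r)\le r$ is equivalent, after clearing the positive denominator, to $(e^{2\tau}-1)(r-1)\ge 0$, which holds. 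In either case $|\mathcal S_{\mathcal N}(\tau)\mathbf w|^2\le \max\{1,|\mathbf w|^2\}$, and taking square roots gives \eqref{s7}.

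There is no substantive obstacle: part (1) is a standard positivity/Jensen argument for the heat semigroup and part (2) is a short algebraic check on an explicit propagator. The only subtle point worth flagging is in part (1): one must apply Jensen \emph{inside} the component sum defining $|\cdot|^2$, rather than bound each component separately in $L_x^\infty$ and then combine, which would introduce a spurious factor of $\sqrt m$. This is precisely what makes the bound respect the Euclidean norm on $\mathbb R^m$ and furnishes the sharp estimate stated in \eqref{s6}.
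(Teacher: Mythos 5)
Your proof is correct. Part (2) is essentially identical to the paper's argument: both reduce to the explicit formula $|\mathcal S_{\mathcal N}(\tau)\mathbf w|^2 = e^{2\tau}r/((e^{2\tau}-1)r+1)$ with $r=|\mathbf w|^2$ and analyze the scalar map by monotonicity plus the observation that $\varphi(\lambda)\le\lambda$ for $\lambda\ge 1$. Part (1) takes a genuinely different, though equally elementary, route: you use Jensen's inequality with the convex map $z\mapsto z^2$ against the probability measure $K_\tau(x-\cdot)\,dy$, applied componentwise and then summed, to get $|\mathcal S_{\mathcal L}(\tau)\mathbf a(x)|^2\le\int_\Omega K_\tau(x-y)|\mathbf a(y)|^2\,dy$. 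The paper instead argues by duality, writing $|\mathbf v|=\sup_{|\tilde{\mathbf v}|\le 1}\langle\mathbf v,\tilde{\mathbf v}\rangle$, moving the fixed test vector inside the convolution, and invoking $\int k=1$. Both approaches hinge on exactly the same two facts about the periodic heat kernel (nonnegativity and unit mass), and both correctly respect the Euclidean norm on $\mathbb R^m$ rather than bounding components in $L^\infty_x$ individually. The duality argument generalizes more transparently to the Frobenius-norm case handled later in the paper (where the authors reuse it verbatim with $\langle\cdot,\cdot\rangle_F$), whereas your Jensen argument is perhaps slightly quicker here and avoids introducing a supremum over test vectors; either is perfectly acceptable.
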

\begin{proof}
We first show \eqref{s6}. Clearly for any vector $\mathbf v \in \mathbb R^m$, we have
\begin{align} \label{s7.1}
|\mathbf v | = \sup_{\tilde {\mathbf v} \in \mathbb R^{ m}:\;
| \tilde {\mathbf v}|\le 1 } \langle \mathbf v, \, \mathbf {\tilde v} \rangle,
\end{align}
where $\langle, \rangle$ denotes the usual $l^2$-inner product.
With no loss we may assume $\| | \mathbf a(x) | \|_{L_x^{\infty} } \le 1$. Fix $x_0 \in \Omega$.
It suffices for us to show 
\begin{align}
  \left| \int_{\Omega} k(x_0-y) \mathbf a(y) dy \right|\le 1,
  \end{align}
where $k(\cdot)$ is the scalar-valued kernel corresponding to $\mathcal S_{\mathcal L}(\tau)$. 
By \eqref{s7.1}, we only need to check for any $\tilde {\mathbf v}$ with $|\tilde 
{\mathbf v} | \le 1$, 
\begin{align}
\int_{\Omega} k(x_0-y) \langle \mathbf a(y),\, \tilde {\mathbf v} \rangle dy \le 1.
\end{align}
But this is obvious since $\int_{\Omega} k(x_0-y) dy =1$ and $
\langle \mathbf a(y), \tilde {\mathbf v}\rangle  \le \| 
| \mathbf a(x) | \| \|_{L_x^{\infty}} |\tilde {\mathbf v} | \le 1$. 

We turn now to \eqref{s7}. By \eqref{s3}, we have
\begin{align} 
|\mathcal S_{\mathcal N}(t) \mathbf w|^2 = \frac{e^{2t} |\mathbf w|^2}{(e^{2t}-1)|\mathbf w|^2+1}.
\end{align}
Consider the scalar function 
\begin{align}
\varphi (\lambda) = \frac{e^{2t} \lambda} { (e^{2t}-1) \lambda +1}.
\end{align}
It is not difficult to check that $\varphi $ is monotonically increasing on $[0, \infty)$.
Furthermore if $\lambda \ge 1$, then 
\begin{align}
\varphi(\lambda) \le \frac{e^{2t} \lambda} {(e^{2t}-1)+1} =\lambda.
\end{align}
The desired result clearly follows.
\end{proof}

\begin{lem} \label{s8}
Let $\tau>0$ and consider $G:\, \mathbb R^m \to \mathbb R$ defined as
\begin{align}
G(\mathbf u)  = \frac1{2\tau} |\mathbf u|^2 - \frac{e^\tau}{\tau(e^{2\tau}-1)} \left(\left(
1+(e^{2\tau}-1)|\mathbf u|^2\right)^{\frac12}-1\right).
\end{align}
For any $\mathbf u, \mathbf v \in \mathbb R^m$, it holds that
\begin{align}
- \langle (\nabla G)(\mathbf u), \, \mathbf v- \mathbf u \rangle 
\le G(\mathbf u) -G(\mathbf v) + \frac 1 {2\tau} | \mathbf v -\mathbf u|^2.
\end{align}
Here 
$\langle \mathbf a, \, \mathbf b \rangle = \sum_{i=1}^m a_i b_i$
for $\mathbf a = (a_1,\cdots, a_m)^{\mathrm T}$,
$\mathbf b =(b_1,\cdots, b_m)^{\mathrm T} \in \mathbb R^m$.
\end{lem}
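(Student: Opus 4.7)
The plan is to recognize the inequality as a disguised convexity statement and then verify convexity directly from the Hessian. Define
\begin{align}
F(\mathbf u) := \frac{1}{2\tau}|\mathbf u|^2 - G(\mathbf u) = \frac{e^{\tau}}{\tau(e^{2\tau}-1)}\Bigl( \sqrt{1+(e^{2\tau}-1)|\mathbf u|^2} - 1 \Bigr),
\end{align}
and set $\alpha := e^{2\tau}-1 > 0$. My claim is that the desired inequality is \emph{exactly} the first-order convexity inequality
\begin{align}
F(\mathbf v) \ge F(\mathbf u) + \langle \nabla F(\mathbf u),\, \mathbf v - \mathbf u \rangle.
\end{align}
To see this, the first step will be a purely algebraic rearrangement: substitute $F = \tfrac{1}{2\tau}|\cdot|^2 - G$ into the displayed inequality and use the identity
$|\mathbf u|^2 - |\mathbf v|^2 + 2\langle \mathbf u, \mathbf v - \mathbf u\rangle = -|\mathbf v - \mathbf u|^2$
to recover precisely $-\langle \nabla G(\mathbf u),\mathbf v - \mathbf u\rangle \le G(\mathbf u) - G(\mathbf v) + \tfrac{1}{2\tau}|\mathbf v - \mathbf u|^2$.

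Since $F$ is $\mathbf u \mapsto c\,\phi(\mathbf u)$ up to an additive constant, with $c>0$ and $\phi(\mathbf u) = \sqrt{1+\alpha|\mathbf u|^2}$, it suffices to prove that $\phi$ is convex on $\mathbb R^m$. I would do this by a direct Hessian calculation: a short differentiation gives
\begin{align}
\partial_i\partial_j \phi(\mathbf u) = \frac{\alpha\,\delta_{ij}}{\sqrt{1+\alpha|\mathbf u|^2}} - \frac{\alpha^2\, u_i u_j}{(1+\alpha|\mathbf u|^2)^{3/2}},
\end{align}
so that for any $\xi \in \mathbb R^m$,
\begin{align}
\langle (\nabla^2 \phi)(\mathbf u)\, \xi,\, \xi \rangle = \frac{\alpha}{(1+\alpha|\mathbf u|^2)^{3/2}} \Bigl( (1+\alpha|\mathbf u|^2)|\xi|^2 - \alpha\,\langle \mathbf u, \xi\rangle^2 \Bigr).
\end{align}
By Cauchy--Schwarz, $\alpha\langle \mathbf u, \xi\rangle^2 \le \alpha|\mathbf u|^2 |\xi|^2 \le (1+\alpha|\mathbf u|^2)|\xi|^2$, so the Hessian is positive semi-definite. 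Hence $\phi$, and therefore $F$, is convex, and the first-order convexity inequality for $F$ delivers the claim after the algebraic rearrangement above.

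I do not expect a serious obstacle here: everything reduces to the convexity of $\mathbf u \mapsto \sqrt{1+\alpha|\mathbf u|^2}$, which is standard (it is the perspective of a convex function, but the Hessian route above is equally short and self-contained). The only mild care is in keeping track of the positive prefactor $\frac{e^{\tau}}{\tau(e^{2\tau}-1)}$ and the completion-of-squares identity that converts $F$-convexity into the stated form involving $G$.
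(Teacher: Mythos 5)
Your proof is correct and takes essentially the same approach as the paper: the paper shows that $h(\mathbf u) = -\sqrt{1+|\mathbf u|^2}$ is concave via a Hessian computation and then Taylor-expands $G$, which is the same fact as your convexity of $\phi(\mathbf u)=\sqrt{1+\alpha|\mathbf u|^2}$ plus the quadratic-completion identity. Your reformulation via the decomposition $G = \tfrac{1}{2\tau}|\cdot|^2 - F$ and first-order convexity of $F$ just makes explicit what the paper leaves to the reader in the final Taylor-expansion step.
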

\begin{proof}
We first examine the auxiliary function
\begin{align}
h(\mathbf u) = - (1+ |\mathbf u |^2)^{\frac 12}.
\end{align}
Clearly
\begin{align}
 \partial_i h  &= - (1+|\mathbf u|^2)^{-\frac 12} u_i;  \\
 \partial_{ij} h &=
 (1+|\mathbf u|^2)^{-\frac 32} u_i u_j - (1+|\mathbf u|^2)^{-\frac 12} \delta_{ij}
 \\
 & = (1+|\mathbf u|^2)^{-\frac 12}
 \Bigl( \frac {u_i} {\sqrt{1+|\mathbf u|^2} }
 \frac {u_j}{ \sqrt{1+|\mathbf u|^2} } -\delta_{ij} \Bigr).
 \end{align}
 Thus 
 \begin{align}
 \sum_{i,j=1}^m \xi_i \xi_j  \partial_{ij} h  \le 0,
 \qquad\forall\, \mathbf {\xi} =(\xi_1,\cdots, \xi_m)^{\mathrm T} \in
 \mathbb R^m, \;\forall\, \mathbf u \in \mathbb R^m.
 \end{align}
 In yet other words, the function $h$ is concave. Our desired result then easily follows from
 Taylor expanding $G(\mathbf u +\theta (\mathbf v - \mathbf u) )$ for $\theta
 \in [0,1]$. 
\end{proof}

\begin{thm}[Unconditional modified energy dissipation for vector-valued Allen-Cahn]
Suppose  $\Omega =[-\pi, \pi]^d$ is the $2\pi$-periodic 
$d$-dimensional torus in physical dimensions $d\le 3$. 
Let  $\mathbf u^0: \; \Omega \to  \mathbb R^{m}$ satisfy $ \| |\mathbf u^{0}(x)| \| \|_{L^{\infty}_x} \le 1$. 
Recall $\mathcal S_{\mathcal L}(t) = e^{t \Delta}$ and  for $\mathbf w\in
\mathbb R^{m}$, 
\begin{align}
\mathcal S_{\mathcal N}(t) \mathbf w \coloneqq 
\left((e^{2t}-1) |\mathbf w|^2+1\right)^{-\frac12} e^t
\mathbf w.
\end{align}
Define for $n\ge 0$ the Strang-splitting iterates
\begin{align}
\mathbf u^{n+1}  = \mathcal S_{\mathcal L}\left( \tau/2\right) \mathcal S_{\mathcal N}\left( \tau \right)\mathcal S_{\mathcal L}\left( \tau/2\right) \mathbf u^n.
\end{align}
For any $\tau>0$, we have
\begin{align}
\widetilde E(\tilde {\mathbf u}^{n+1})\le \widetilde E(\tilde {\mathbf u}^{n}),\quad \forall\, n\ge 0,
\end{align}
where 
\begin{align}
& \tilde{\mathbf u}^n = \mathcal S_{\mathcal L}\left( \tau/2\right)\mathbf u^n ; \\
&\widetilde E({\mathbf u}) =\int_\Omega \left( \frac{1}{2\tau}\left\langle (e^{-\tau \Delta} -1)  {\mathbf u},  {\mathbf u} \right\rangle + G({\mathbf u})  \right)\, dx;\\
& G(\mathbf u)  = \frac1{2\tau} |\mathbf u|^2 - \frac{e^\tau}{\tau(e^{2\tau}-1)} \left(\left(
1+(e^{2\tau}-1)|\mathbf u|^2\right)^{\frac12}-1\right).
\end{align}
In the above $\langle \mathbf a, \, \mathbf b \rangle = \sum_{i=1}^m a_i b_i$
for $\mathbf a = (a_1,\cdots, a_m)^{\mathrm T}$,
$\mathbf b =(b_1,\cdots, b_m)^{\mathrm T} \in \mathbb R^m$.
\end{thm}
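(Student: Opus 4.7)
By the semigroup property of the heat flow, $\mathcal S_{\mathcal L}(\tau/2)\mathcal S_{\mathcal L}(\tau/2) = e^{\tau\Delta}$, so
$\tilde{\mathbf u}^{n+1} = \mathcal S_{\mathcal L}(\tau/2)\mathbf u^{n+1} = e^{\tau\Delta}\mathcal S_{\mathcal N}(\tau)\tilde{\mathbf u}^n$.
I set $\mathbf w = \tilde{\mathbf u}^n$, $\mathbf v = \mathcal S_{\mathcal N}(\tau)\mathbf w$, and $\mathbf w' = e^{\tau\Delta}\mathbf v = \tilde{\mathbf u}^{n+1}$, so the telescoping reduces to the one-step inequality $\widetilde E(\mathbf w') \le \widetilde E(\mathbf w)$. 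The maximum principle (Theorem~\ref{s5} applied iteratively) together with the hypothesis $\| \, |\mathbf u^0| \,\|_{L_x^\infty}\le 1$ gives $\| \, |\mathbf w| \,\|_{L_x^\infty},\, \| \, |\mathbf v| \,\|_{L_x^\infty},\, \| \, |\mathbf w'| \,\|_{L_x^\infty}\le 1$, so all pointwise quantities below are well-defined.

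I next rewrite the modified energy using the split $G(\mathbf u) = \frac{1}{2\tau}|\mathbf u|^2 + \tilde G(\mathbf u)$, where $\tilde G(\mathbf u) = -\frac{e^\tau}{\tau(e^{2\tau}-1)}\bigl((1+(e^{2\tau}-1)|\mathbf u|^2)^{1/2}-1\bigr)$ is pointwise concave; this concavity is the substance of Lemma~\ref{s8}, where $h(\mathbf u) = -(1+|\mathbf u|^2)^{1/2}$ is explicitly shown concave. Absorbing the $\frac{1}{2\tau}|\mathbf u|^2$ piece into the kinetic part yields the cleaner form $\widetilde E(\mathbf u) = \frac{1}{2\tau}\langle e^{-\tau\Delta}\mathbf u, \mathbf u\rangle_{L^2} + \int_\Omega \tilde G(\mathbf u)\,dx$. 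Crucially, since $\mathbf w' = e^{\tau\Delta}\mathbf v$, one has $e^{-\tau\Delta}\mathbf w' = \mathbf v$, which simplifies the kinetic part of $\widetilde E(\mathbf w')$ to $\frac{1}{2\tau}\int_\Omega \langle \mathbf v, \mathbf w'\rangle\,dx$.

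The core step is to apply Lemma~\ref{s8} pointwise with $\mathbf u := \mathbf v(x)$ and $\mathbf v := \mathbf w'(x)$, yielding $G(\mathbf w') \le G(\mathbf v) + \langle \nabla G(\mathbf v), \mathbf w' - \mathbf v\rangle + \frac{1}{2\tau}|\mathbf w' - \mathbf v|^2$. Integrating and combining with the simplified kinetic expression above, the polarization $|\mathbf w' - \mathbf v|^2 = |\mathbf w'|^2 - 2\langle \mathbf v, \mathbf w'\rangle + |\mathbf v|^2$ makes the $|\mathbf w'|^2$ contributions cancel exactly and produces
\begin{equation*}
\widetilde E(\mathbf w') \le \int_\Omega G(\mathbf v)\,dx + \int_\Omega \langle \nabla \tilde G(\mathbf v), \mathbf w' - \mathbf v\rangle\,dx.
\end{equation*}
A second application of Lemma~\ref{s8} with $\mathbf u := \mathbf w(x)$ and $\mathbf v := \mathbf v(x)$ then transfers $\int G(\mathbf v)\,dx$ back to $\int G(\mathbf w)\,dx$ modulo a gradient term in $\tilde G$ and a $\frac{1}{2\tau}\|\mathbf v - \mathbf w\|_{L^2}^2$ excess.

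The main obstacle, and the technical heart of the argument, is to close the inequality by absorbing the two surviving cross terms into the Dirichlet-type quadratic form $\frac{1}{2\tau}\langle(e^{-\tau\Delta}-I)\mathbf w, \mathbf w\rangle \ge 0$ that sits inside $\widetilde E(\mathbf w)$. For the linear cross term $\int\langle \nabla\tilde G(\mathbf v), \mathbf w' - \mathbf v\rangle\,dx$, I would use $\mathbf w' - \mathbf v = \int_0^\tau e^{s\Delta}\Delta \mathbf v\,ds$, integrate by parts in $x$ via the periodicity of $\Omega$, and exploit the negative semi-definiteness of $\mathrm{Hess}\,\tilde G$ (again from the proof of Lemma~\ref{s8}) to sign the resulting integrand. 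For the nonlinear-step residuals, the explicit formula $\mathbf v = \mathcal S_{\mathcal N}(\tau)\mathbf w$ together with the algebraic identity $1+(e^{2\tau}-1)|\mathbf v|^2 = \bigl(1+(e^{4\tau}-1)|\mathbf w|^2\bigr)\big/\bigl(1+(e^{2\tau}-1)|\mathbf w|^2\bigr)$ allows a closed-form evaluation of $G(\mathbf v) - G(\mathbf w)$ yielding sign-definite expressions. The delicate book-keeping is that the specific profile $g'(r) = \frac{1}{2\tau}\bigl[1 - e^\tau(1+(e^{2\tau}-1)r)^{-1/2}\bigr]$ defining $G$ has been tuned so that the concavity defect and the Jensen-type cross term cancel against the available dissipation with no slack, which is exactly what permits the unconditional conclusion.
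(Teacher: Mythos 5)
Your proposal correctly identifies Lemma~\ref{s8} (the one-sided Taylor/concavity bound for $G$) as the crucial tool, but the overall strategy differs from the paper's and, as you acknowledge in your final paragraph, does not close. The essential observation you are missing is the algebraic identity
\begin{align*}
\mathcal S_{\mathcal N}(\tau)\mathbf w-\mathbf w
= \frac{e^\tau\mathbf w}{\bigl((e^{2\tau}-1)|\mathbf w|^2+1\bigr)^{1/2}}-\mathbf w
= -\tau\,(\nabla G)(\mathbf w),
\end{align*}
i.e., the nonlinear propagator is \emph{exactly} one explicit Euler step on $G$ with step~$\tau$. Once this is noted, the Strang iteration written in the $\tilde{\mathbf u}$ variables becomes the IMEX-type scheme
\begin{align*}
\frac{1}{\tau}(e^{-\tau\Delta}-1)\tilde{\mathbf u}^{n+1}
+\frac{1}{\tau}\bigl(\tilde{\mathbf u}^{n+1}-\tilde{\mathbf u}^{n}\bigr)
= -(\nabla G)(\tilde{\mathbf u}^{n}),
\end{align*}
and the proof is a \emph{single} application of Lemma~\ref{s8} after pairing this with $\tilde{\mathbf u}^{n+1}-\tilde{\mathbf u}^{n}$ and integrating; the kinetic increment is handled by the elementary symmetric-bilinear-form identity $\langle A\mathbf a,\mathbf a-\mathbf b\rangle = \tfrac12\langle A\mathbf a,\mathbf a\rangle - \tfrac12\langle A\mathbf b,\mathbf b\rangle + \tfrac12\langle A(\mathbf a-\mathbf b),\mathbf a-\mathbf b\rangle$ with $A=e^{-\tau\Delta}-1\succeq 0$.

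Concrete gaps in your version. First, the intermediate display you claim after the first application of Lemma~\ref{s8} is not right: after cancelling the $|\mathbf w'|^2$ pieces you are still left with the extra term $\tfrac{1}{2\tau}\int(\langle\mathbf v,\mathbf w'\rangle-|\mathbf v|^2)\,dx$, which happens to be $\le 0$ but is not identically zero, so the claimed equality does not hold. Second, your second application of Lemma~\ref{s8} introduces an \emph{additional} $+\tfrac{1}{2\tau}|\mathbf v-\mathbf w|^2$ excess of the wrong sign, and the plan to ``absorb the two surviving cross terms into $\tfrac{1}{2\tau}\langle(e^{-\tau\Delta}-1)\mathbf w,\mathbf w\rangle\ge 0$'' is logically backwards: that kinetic term is nonnegative and already appears on the side you need to stay \emph{below}, so it cannot soak up positive excess. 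The integration-by-parts/Duhamel sketch for $\mathbf w'-\mathbf v=\int_0^\tau e^{s\Delta}\Delta\mathbf v\,ds$ is not needed in the paper's route and is not actually carried through in yours. Finally, the $L^\infty$ bound you invoke via the maximum principle is harmless but not actually required here: Lemma~\ref{s8} holds for all $\mathbf u,\mathbf v\in\mathbb R^m$ because the concave function $h(\mathbf u)=-(1+|\mathbf u|^2)^{1/2}$ is globally concave, and the modified energy dissipation in the vector-valued case is genuinely unconditional.
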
 
\begin{proof}
By definition we have
\begin{align}
e^{-\tau \Delta}\tilde{\mathbf u}^{n+1}  = \frac{e^\tau\tilde{\mathbf u}^n}{\left((e^{2\tau}-1)|\tilde{\mathbf u}^n|^2+1\right)^{\frac12}}.
\end{align}
We rewrite it as
\begin{align} \label{s10}
\frac 1 {\tau} (e^{-\tau \Delta}-1)\tilde{\mathbf u}^{n+1} +\frac 1 {\tau}  (\tilde{\mathbf u}^{n+1}-\tilde{\mathbf u}^{n}) = \frac 1 {\tau} \left( \frac{e^\tau\tilde{\mathbf u}^n}{\left((e^{2\tau}-1)|\tilde{\mathbf u}^n|^2+1\right)^{\frac12}}-\tilde{\mathbf u}^{n}
\right).
\end{align}
Note that 
\begin{align}
( \nabla G)({\mathbf u})  =\frac{\mathbf u}\tau -\frac{e^\tau{\mathbf u}}{\tau\left((e^{2\tau}-1)|{\mathbf u}|^2+1\right)^{\frac12}}. 
\end{align}
By Lemma \ref{s8}, we have
\begin{align}
& \left\langle \frac 1 {\tau} \Bigl( \frac{e^\tau\tilde{\mathbf u}^n}{\left((e^{2\tau}-1)|\tilde{\mathbf u}^n|^2+1\right)^{\frac12}}-\tilde{\mathbf u}^{n}
\Bigr),  \; \tilde {\mathbf u}^{n+1} - \tilde {\mathbf u}^{n} \right\rangle \\
\le & \; G( \tilde {\mathbf u}^{n} ) -G( \tilde {\mathbf u}^{n+1})
+ \frac{1}{2\tau} |\tilde {\mathbf u}^{n+1} - \tilde {\mathbf u}^{n}|^2.
\end{align}
Taking the $l^2$-inner product with   $  (\tilde {\mathbf u}^{n+1} - \tilde {\mathbf u}^{n})$ 
and integrating in $x$ on both sides of \eqref{s10}, we have 
\begin{equation}
\begin{aligned}
\widetilde E(\tilde {\mathbf u}^{n+1})- \widetilde E(\tilde {\mathbf u}^{n}) 
&\le -\frac{1}{2\tau} \int_{\Omega} |\tilde {\mathbf u}^{n+1} - \tilde {\mathbf u}^{n}|^2 \,dx
\le 0.
\end{aligned}
\end{equation}
\end{proof}

\section{Matrix-valued Allen-Cahn equation}\label{sect3}
In this section we carry out  the proof of Theorem \ref{thm2} in several steps. 
We study the matrix-valued Allen-Cahn equation for
$U=U(t, x): \, [0,\infty) \times \Omega \to \mathbb R^{m\times m}$:
\begin{align} 
\partial_t U = \Delta U  + U - UU^{\mathrm T} U.
\end{align}
The spatial domain $\Omega=[-\pi, \, \pi]^d$ is the $2\pi$-periodic torus in physical
dimensions $d\le 3$.  

\subsection{Definition and properties of $\mathcal S_{\mathcal N}$ and $\mathcal S_{\mathcal L}$}
We consider first the pure nonlinear part, i.e. the following 
ODE for $U=U(t):\, [0,\infty) \to \mathbb R^{m\times m}$:
\begin{align} \label{3t0}
\begin{cases}
\frac d {dt} U = U-UU^{\mathrm T}U, \\
U \Bigr|_{t=0} = U_0 \in \mathbb R^{m\times m}.
\end{cases}
\end{align}
Remarkably, we find that the above ODE admits an explicit solution. 
\begin{prop}[The explicit nonlinear propagator $\mathcal S_{\mathcal N}(t)$]
Given $U_0 \in \mathbb R^{m\times m}$, the unique smooth
 solution $U(t)$ to \eqref{3t0} is given by 
\begin{align} \label{3t1}
\mathcal S_{\mathcal N}(t) U_0 := U(t) = \left((e^{2t}-1) U_0U_0^{\mathrm T} + \mathrm I\right)^{-\frac12} e^t U_0,\qquad t>0.
\end{align}
\end{prop}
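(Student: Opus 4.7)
The plan is to reduce the matrix ODE to a scalar logistic equation via the auxiliary quantity $P := UU^{\mathrm T}$, mirroring exactly how $|\mathbf u|^2$ was used in the vector case.

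First, I would differentiate $P = UU^{\mathrm T}$ along the flow \eqref{3t0} to obtain, after a short computation, $\dot P = 2P - 2P^2$. Since the right-hand side is a polynomial in $P$, the flow preserves the commutative polynomial algebra generated by $P_0 := U_0 U_0^{\mathrm T}$, and in particular $P(t)$ commutes with $P_0$ for every $t \ge 0$. Diagonalising the symmetric positive semidefinite matrix $P_0 = V \Lambda V^{\mathrm T}$ then decouples the equation into $m$ independent scalar logistic ODEs $\dot d_i = 2 d_i(1-d_i)$, each of which has the explicit solution already computed in Section \ref{sect2}. Reassembling yields
\begin{align}
P(t) = e^{2t} P_0 \, M(t)^{-1}, \qquad M(t) := (e^{2t}-1) P_0 + \mathrm I,
\end{align}
where $M(t)$ is a polynomial in $P_0$, is symmetric positive definite, and commutes with $P_0$ as well as with $\dot M(t) = 2 e^{2t} P_0$.

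Second, with $P(t)$ pinned down, I would introduce the ansatz $U(t) := M(t)^{-1/2} e^t U_0$ and verify directly that (i) $UU^{\mathrm T}$ equals the $P(t)$ just computed, and (ii) $U(t)$ solves \eqref{3t0}. Step (i) is a short calculation that exploits the commutativity of $M^{-1/2}$ with $P_0$, yielding $UU^{\mathrm T} = e^{2t} M^{-1/2} P_0 M^{-1/2} = e^{2t} P_0 M^{-1}$. For (ii), I would use the scalar-like identity
\begin{align}
\frac{d}{dt} M(t)^{-1/2} = -\tfrac{1}{2} M(t)^{-3/2} \dot M(t) = - e^{2t} P_0 M(t)^{-3/2},
\end{align}
which is legitimate precisely because $M$ and $\dot M$ commute, and then check $\dot U = U - e^{3t} P_0 M^{-3/2} U_0 = U - U U^{\mathrm T} U$ by direct substitution. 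Uniqueness of the smooth solution is standard, since the right-hand side of \eqref{3t0} is polynomial in $U$ and hence locally Lipschitz.

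The main obstacle I anticipate is purely bookkeeping: in the non-commutative matrix setting every algebraic manipulation requires a commutativity justification. The essential observation that disposes of this obstacle is that each matrix entering the ansatz -- $P_0$, $M(t)$, $M(t)^{\pm 1/2}$, and $\dot M(t)$ -- lies in the commutative polynomial algebra generated by the single matrix $P_0$, so these objects can be manipulated as though they were scalars, and the vector-case derivation transfers essentially verbatim.
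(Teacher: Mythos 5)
Your argument is correct, and the decisive step (ii) is essentially the paper's proof: differentiate $M(t)^{-1/2}$ using that $M$, $\dot M$, and $U_0 U_0^{\mathrm T}$ all commute, then compute $\dot U$ and $UU^{\mathrm T}U$ separately and observe they match. The difference is that you preface this with a derivation of $P(t) = UU^{\mathrm T}$ from the matrix logistic equation $\dot P = 2P - 2P^2$, diagonalising $P_0$ to reduce to scalar ODEs. The paper omits this entirely: it simply writes down the ansatz and verifies it, without explaining where it comes from. Your extra step is logically redundant for the verification (once you posit the ansatz, you never actually need to have solved for $P(t)$ independently — step (i) recomputes $UU^{\mathrm T}$ directly from the ansatz anyway), but it faithfully mirrors the vector-valued derivation in Section~\ref{sect2} and supplies the motivation the paper leaves implicit. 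One small remark: your phrase ``the flow preserves the commutative polynomial algebra generated by $P_0$'' is, on its own, somewhat informal; the diagonalisation argument you give immediately afterwards (together with the observation that uniqueness of the ODE forces $P(t)$ to remain symmetric, so the diagonal ansatz in the $V$-basis is \emph{the} solution) is what actually makes it rigorous, and you should treat that as the real justification rather than the algebra-preservation slogan.
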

\begin{rem}
If $U_0$ is a matrix-valued function, i.e. $U_0:\, \Omega \to \mathbb R^{m\times m}$, then
we naturally extend the definition of $\mathcal S_{\mathcal N}(t) U_0$ as
\begin{align}
\Bigl(  \mathcal S_{\mathcal N} (t) U_0 \Bigr)(x) = \mathcal S_{\mathcal N} (t) (U_0(x)),
\quad x \in \Omega.
\end{align}
This convention will be used without explicit mentioning.
\end{rem}
\begin{proof}
We begin by noting that
\begin{align}
\frac d {dt} \Bigl(   (e^{2t}-1) U_0 U_0^{\mathrm T} + \mathrm I \Bigr)
= 2e^{2t} U_0 U_0^{\mathrm T}.
\end{align}
This clearly commutes with $  (e^{2t}-1) U_0 U_0^{\mathrm T} + \mathrm I$.  In particular we
have
\begin{align}
\frac d {dt}
\Bigl(  ( (e^{2t}-1) U_0 U_0^{\mathrm T} + \mathrm I)^{-\frac 12} \Bigr)
&=  - e^{2t} U_0 U_0^{\mathrm T}  
( (e^{2t}-1) U_0 U_0^{\mathrm T} + \mathrm I)^{-\frac 32} \\
&= - ( (e^{2t}-1) U_0 U_0^{\mathrm T} + \mathrm I)^{-\frac 32}  
e^{2t} U_0 U_0^{\mathrm T}.
\end{align}
With the above we obtain 
\begin{align}
 U^{\prime}(t) = \left((e^{2t}-1) U_0U_0^{\mathrm T} + \mathrm I\right)^{-\frac12} e^t U_0
-\left((e^{2t}-1) U_0U_0^{\mathrm T} + \mathrm I\right)^{-\frac32} e^{3t} U_0 U_0^{\mathrm T} U_0.
\end{align}
Note that $\left((e^{2t}-1) U_0U_0^{\mathrm T} + \mathrm I\right)^{-\frac12}$ and $U_0U_0^{\mathrm T}$ commute.  It follows that 
\begin{equation}
\begin{aligned}
U U^{\mathrm T} U 
& = \left((e^{2t}-1) U_0U_0^{\mathrm T} + \mathrm I\right)^{-\frac12} e^{2t} U_0 U_0^{\mathrm T} \left((e^{2t}-1) U_0U_0^{\mathrm T} + \mathrm I\right)^{-1}e^t U_0 \\
& = \left((e^{2t}-1) U_0U_0^{\mathrm T} +\mathrm I\right)^{-\frac32} e^{3t} U_0 U_0^{\mathrm T} U_0.
\end{aligned}
\end{equation}
Therefore, $U(t)$ satisfies 
\begin{equation}
\frac d {dt} U = U-UU^{\mathrm T}U.
\end{equation}
\end{proof}

Given $U:\, \Omega \to \mathbb R^{m\times m}$ and $t>0$, we define 
the linear propagator 
\begin{align}
\Bigl( \mathcal S_{\mathcal L} (t) U \Bigr)_{ij} (x) =  \Bigl( e^{t\Delta} U_{ij} \Bigr)(x),
\qquad i, j=1,\cdots m.
\end{align}
In yet other words, the operator $\mathcal S_{\mathcal L}(t) = e^{t\Delta}$ is applied to the matrix $U$ entry-wise.

\begin{thm}[Maximum principle for $\mathcal S_{\mathcal L}$ and $\mathcal S_{\mathcal N}$]
\label{3t5}
Let $\Omega=[-\pi, \pi]^d$ be the $2\pi$-periodic $d$-dimensional torus. 
For any $\tau>0$,  the following hold.
\begin{enumerate}
\item For any measurable matrix-valued $A:\, \Omega \to \mathbb R^{m\times m}$, we have
\begin{align} \label{2t1}
& \|  \| \mathcal S_{\mathcal L}(\tau) A \|_F \|_{L_x^{\infty} } \le 
 \| \| A(x) \|_F \|_{L_x^{\infty} }.
\end{align}

\item  For any $B \in \mathbb R^{m\times m}$ with $\| B\|_F \le \sqrt m$, we have
\begin{align} \label{2t2}
\| \mathcal S_{\mathcal N}(\tau) B \|_F \le \sqrt m.
\end{align}
\end{enumerate}
\end{thm}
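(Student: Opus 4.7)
The plan is to treat the two parts separately: part (1) is a verbatim adaptation of the duality argument used for Theorem~\ref{s5}, while part (2) reduces the matrix inequality to a scalar concavity statement by diagonalising $B B^{\mathrm T}$.

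For part (1), I would mirror the proof of \eqref{s6}, replacing the Euclidean inner product by the Frobenius inner product. The two ingredients are the duality identity
\begin{equation*}
\| A \|_F = \sup \bigl\{ \langle A, \tilde A \rangle_F :\; \tilde A \in \mathbb R^{m\times m},\ \|\tilde A\|_F \le 1 \bigr\},
\end{equation*}
together with the fact that $\mathcal S_{\mathcal L}(\tau)$ acts on each entry by convolution against the same non-negative scalar kernel $k(\cdot)$, with $\int_\Omega k(y)\, dy = 1$. Fixing $x_0 \in \Omega$, writing $(\mathcal S_{\mathcal L}(\tau) A)(x_0) = \int_\Omega k(x_0-y)\, A(y)\, dy$ entry-wise, and exchanging the supremum with the integral would yield
\begin{equation*}
\| (\mathcal S_{\mathcal L}(\tau) A)(x_0) \|_F \le \int_\Omega k(x_0-y)\, \| A(y) \|_F\, dy \le \bigl\| \, \| A \|_F\, \bigr\|_{L^\infty_x},
\end{equation*}
from which \eqref{2t1} follows after taking supremum in $x_0$.

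For part (2), the central algebraic observation is that $C := B B^{\mathrm T}$ commutes with any matrix function of itself, so
\begin{equation*}
(\mathcal S_{\mathcal N}(\tau) B)(\mathcal S_{\mathcal N}(\tau) B)^{\mathrm T} = e^{2\tau}\, C \bigl( (e^{2\tau}-1)\, C + \mathrm I \bigr)^{-1}.
\end{equation*}
Diagonalising the symmetric positive semi-definite matrix $C$ with eigenvalues $\lambda_1,\dots,\lambda_m \ge 0$ and taking the trace, I would obtain
\begin{equation*}
\| \mathcal S_{\mathcal N}(\tau) B \|_F^2 = \sum_{i=1}^m \varphi(\lambda_i), \qquad \varphi(\lambda) := \frac{e^{2\tau}\, \lambda}{(e^{2\tau}-1)\lambda + 1},
\end{equation*}
where $\varphi$ is precisely the scalar function that appeared in the proof of Theorem~\ref{s5}. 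The hypothesis $\|B\|_F \le \sqrt m$ rewrites as $\sum_i \lambda_i = \mathrm{Tr}(C) \le m$.

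To conclude, a direct second-derivative computation shows that $\varphi$ is strictly increasing and strictly concave on $[0,\infty)$, with $\varphi(0)=0$ and $\varphi(1)=1$. Jensen's inequality for concave functions, combined with monotonicity, then gives
\begin{equation*}
\frac{1}{m} \sum_{i=1}^m \varphi(\lambda_i) \le \varphi\Bigl( \frac{1}{m} \sum_{i=1}^m \lambda_i \Bigr) \le \varphi(1) = 1,
\end{equation*}
whence $\|\mathcal S_{\mathcal N}(\tau) B\|_F^2 \le m$, as required. The point I expect to require the most care is resisting an eigenvalue-by-eigenvalue bound: an individual $\lambda_i$ can be as large as $m$ (for example when $C$ has rank one), in which case $\varphi(\lambda_i)$ far exceeds $1$ for small $\tau$. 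Only the \emph{average} constraint $\tfrac{1}{m}\sum \lambda_i \le 1$, together with the concavity of $\varphi$, is strong enough to yield the pointwise conclusion.
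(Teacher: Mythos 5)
Your proof of part (1) is exactly the paper's argument: the Frobenius duality identity $\|M\|_F = \sup_{\|\tilde M\|_F\le1}\langle M,\tilde M\rangle_F$ combined with the fact that the heat kernel is a non-negative probability kernel. Your proof of part (2) is correct but takes a different route from the paper's primary proof. The paper argues dynamically: setting $\alpha(t)=\mathrm{Tr}\bigl(U(t)U(t)^{\mathrm T}\bigr)$ for $U(t)=\mathcal S_{\mathcal N}(t)B$, it uses $\alpha=\langle UU^{\mathrm T},\mathrm I\rangle_F\le\sqrt m\,\|UU^{\mathrm T}\|_F$ to derive the differential inequality $\tfrac12\partial_t(\alpha/m)\le(\alpha/m)-(\alpha/m)^2$, and concludes by a comparison argument that $\alpha(t)/m$ cannot exceed $1$. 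Your route is algebraic: you diagonalise $C=BB^{\mathrm T}$, reduce to the scalar function $\varphi(\lambda)=e^{2\tau}\lambda/\bigl((e^{2\tau}-1)\lambda+1\bigr)$, and invoke Jensen together with monotonicity. In fact the paper records exactly your argument as an ``alternative proof'' in a remark immediately following the theorem, so you have independently rediscovered that second proof. The trade-off is as you'd expect: the ODE argument avoids eigenvalue decompositions and generalises readily when only a differential inequality is available, while your spectral argument is cleaner and pinpoints precisely why the bound is sharp (equality at $\lambda_1=\cdots=\lambda_m=1$). Your closing observation--that a per-eigenvalue bound fails for rank-one $C$ and that the averaged Jensen estimate is essential--is correct and is the key subtlety in this approach.
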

\begin{rem}
In  \cite[Prop. 3.2.]{OW20}, Osting and Wang proved a maximum principle for $\mathcal S_{\mathcal L}(\tau)$ under the assumption that $A$ is a continuous function with $\|A\|_F =1$ for every
$x \in \Omega$. We do not need such a stringent assumption here. Our result here is optimal and the proof appears to be simpler.
\end{rem}

\begin{proof}
We first show \eqref{2t1}. Recall the usual Frobenius inner product:
\begin{align}
\langle M_1, \, M_2 \rangle_F = \sum_{i,j=1}^m (M_1)_{ij} (M_2)_{ij}
=\mathrm{Tr} (M_1 M_2^{\mathrm T} ).
\end{align}
For any matrix $M \in \mathbb R^{m\times m}$, we clearly  have
\begin{align} \label{2t3}
\| M \|_F = \sup_{\tilde M \in \mathbb R^{m\times m}:\;
\| \tilde M \|_F \le 1} \langle M, \, \tilde M \rangle_F.
\end{align}
With no loss we may assume $\| \| A(x) \|_F \|_{L_x^{\infty} } \le 1$. Fix $x_0 \in \Omega$.
It suffices for us to show 
\begin{align}
  \| \int_{\Omega} k(x_0-y) A(y) dy \|_F \le 1,
  \end{align}
where $k(\cdot)$ is the scalar-valued kernel corresponding to $\mathcal S_{\mathcal L}(\tau)$. 
By \eqref{2t3}, we only need to check for any $\tilde M$ with $\| \tilde M \|_F \le 1$, 
\begin{align}
\int_{\Omega} k(x_0-y) \langle A(y),\, \tilde M \rangle_F dy \le 1.
\end{align}
But this is obvious since $\int_{\Omega} k(x_0-y) dy =1$ and $
\langle A(y), \tilde M \rangle_F \le \| \|A(x) \|_F \|_{L_x^{\infty}} \|\tilde M \|_F \le 1$. 

Next we show \eqref{2t2}.  Denote $U=U(t) = \mathcal S_{\mathcal N}(t) B$. Clearly
\begin{align}
\partial_t U = U - UU^{\mathrm T} U.
\end{align}
Taking the $L^2$ Frobenius inner with $U$ on both sides of the above equation, we obtain
\begin{align}
\frac 12 \partial_t  \alpha(t) = \alpha(t) -  \| U(t) U(t)^{\mathrm T} \|_F^2,
\end{align}
where we have denoted 
\begin{align}
\alpha(t) = \langle U(t), \, U(t) \rangle_F = \mathrm{Tr} ( U(t) U(t)^{\mathrm T} ).
\end{align}
Note that 
\begin{align}
\alpha=\mathrm{Tr}(UU^{\mathrm T} ) = \langle UU^{\mathrm T}, \mathrm I \rangle_F
\le \| UU^{\mathrm T} \|_F \sqrt m.
\end{align}
Thus 
\begin{align}
\| U(t) U(t)^{\mathrm T} \|_F^2 \ge \frac 1 m \alpha(t)^2.
\end{align}
It follows that 
\begin{align}
\frac 12 \partial_t \left( \frac 1 m \alpha(t) \right) \le \frac 1m \alpha(t) - \left( \frac 1m \alpha(t) \right)^2.
\end{align}
It is not difficult to check  that $\frac 1m \alpha(t) $ is a continuously-differentiable function of $t$ defined
for all $t\ge 0$, nonnegative and
$\frac 1 m \alpha(0) \le 1$.  By a simple argument-by-contradiction, we can show that
for any $\delta_1>0$
\begin{align}
\sup_{t\ge 0} \frac 1 m \alpha(t) \le 1+\delta_1.
\end{align}
Sending $\delta_1$ to zero then yields the desired estimate.
\end{proof}

\begin{rem}
An alternative proof of \eqref{2t2} goes as follows. 
Since $\|B\|_F\le \sqrt m$,  we have
\begin{equation}
\mathrm{Tr}(BB^{\mathrm T}) = \sum_{i=1}^{m} \lambda_i\le m,
\end{equation}
where $\lambda_i\ge 0$ are the eigenvalues of $BB^{\mathrm T}$.
By \eqref{3t1}, we have
\begin{align} 
\mathcal S_{\mathcal N}(t) B &= \left((e^{2t}-1) BB^{\mathrm T} + \mathrm I\right)^{-\frac12} e^t B ;
\\
 \|\mathcal S_{\mathcal N}(t) B \|_F^2 
&=e^{2t}  \mathrm{Tr} \left(  ( (e^{2t}-1) BB^{\mathrm T} +
\mathrm I  )^{-\frac 12}  B B^{\mathrm T} ( (e^{2t}-1) BB^{\mathrm T} +
\mathrm I  )^{-\frac 12}
\right) \notag \\
& = e^{2t} \mathrm{Tr} \left(  ( (e^{2t}-1) BB^{\mathrm T} +
\mathrm I  )^{-1}  B B^{\mathrm T} \right) \\
& =   \sum_{i=1}^{m} 
\underbrace{e^{2t}\left((e^{2t}-1) \lambda_i+ 1\right)^{-1} \lambda_i }_{=:\varphi(\lambda_i)}.
\end{align}
Clearly for any $\lambda \ge 0$, 
\begin{align}
\varphi'(\lambda) &=e^{2t} \left((e^{2t}-1) \lambda+ 1\right)^{-2}\ge 0; \\
\varphi''(\lambda) & = -2 e^{2t} (e^{2t}-1) \left((e^{2t}-1) \lambda+ 1\right)^{-3}\le 0.
\end{align}
In particular  $\varphi$ is a concave function on $[0, \infty)$.  By Jensen's inequality and the
fact that $\frac 1 m \sum_{i=1}^m \lambda_i \le 1$ , we have
\begin{align}
\frac 1m \sum_{i=1}^m \varphi( \lambda_i) \le \varphi( \frac 1 m\sum_{i=1}^m \lambda_i)
\le \varphi (1)= 1.
\end{align}
Thus $ \| \mathcal S_{\mathcal N} (t) B \|_F^2 \le m $.
\end{rem}

\subsection{Modified energy dissipation}
In this subsection we shall often use (sometimes without explicit mentioning)
 the obvious identity
\begin{align}
\mathrm{Tr}(AB^{\mathrm T} ) = \langle A, \, B \rangle_F = \sum_{i,j=1}^m A_{ij} B_{ij},
\qquad\forall\, A, B \in \mathbb R^{m\times m},
\end{align}
where $\langle, \rangle_F$ denotes the usual Frobenius inner product.
In particular
\begin{align}
\mathrm{Tr}(A) = \langle A, \, \mathrm I \rangle_F.
\end{align}
It follows that if $A=A(s)$, $s\in [0,1]$ is a continuously differentiable matrix-valued function, then
\begin{align}
\frac d {ds} \mathrm{Tr}(A(s)) = \langle A^{\prime}(s), \, \mathrm I \rangle_F
= \mathrm{Tr} (A^{\prime}(s) ).
\end{align}
Other formulae follow similarly from the above identities.

\begin{lem} \label{lemt0.1}
Suppose $B=B(s):\; s\in[0,1]\to \mathbb R^{m\times m}$ is continuously differentiable with
\begin{align}
\max_{0\le s \le 1} \| B(s) \|_F <1.
\end{align}
For any $s \in [0, 1]$ and any $B_1 \in \mathbb R^{m\times m}$, it holds that
\begin{align}
\left| \mathrm{Tr}\Bigl( \frac d {ds} \left( (\mathrm I + B(s) )^{-\frac 12} \right) B_1 \Bigr)
\right|
\le   \frac 12 (1- \|B(s) \|_F)^{-\frac 32} \| B^{\prime}(s) \|_F \| B_1 \|_F.
\end{align}
\end{lem}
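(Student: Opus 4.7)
The plan is to expand $(\mathrm I + B(s))^{-1/2}$ as an absolutely convergent binomial series, differentiate termwise in $s$, and then estimate the resulting trace sum using submultiplicativity of $\|\cdot\|_F$ together with the trace Cauchy--Schwarz inequality. By hypothesis $\rho := \max_{0\le s\le 1}\|B(s)\|_F<1$, and since $\|\cdot\|_F$ is submultiplicative the series
\[
(\mathrm I + B(s))^{-1/2} = \sum_{k=0}^\infty \binom{-1/2}{k} B(s)^k
\]
converges absolutely and uniformly on $[0,1]$, with scalar majorant $(1-\rho)^{-1/2}$. Because $B\in C^1([0,1])$ and
\[
\frac{d}{ds} B(s)^k = \sum_{j=0}^{k-1} B(s)^j\, B'(s)\, B(s)^{k-1-j},
\]
the formal derivative series is dominated by $\bigl(\max_s\|B'(s)\|_F\bigr)\sum_{k\ge 1} k\bigl|\binom{-1/2}{k}\bigr|\rho^{k-1}<\infty$, which legitimates termwise differentiation.

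Next I would pair the resulting series against $B_1$ under the trace. The inequality $|\mathrm{Tr}(XY)|\le \|X\|_F\|Y\|_F$ combined with $\|AB\|_F\le\|A\|_F\|B\|_F$ gives, for every admissible $j,k$,
\[
\bigl|\mathrm{Tr}\bigl(B(s)^j B'(s) B(s)^{k-1-j} B_1\bigr)\bigr|
\le \|B(s)\|_F^{k-1}\,\|B'(s)\|_F\,\|B_1\|_F.
\]
Summing the $k$ choices of $j$ at level $k$, and then over $k\ge 1$, produces
\[
\biggl|\mathrm{Tr}\Bigl(\frac{d}{ds}(\mathrm I + B(s))^{-1/2}\, B_1\Bigr)\biggr|
\le \|B'(s)\|_F\,\|B_1\|_F \sum_{k=1}^\infty k\,\Bigl|\binom{-1/2}{k}\Bigr|\,\|B(s)\|_F^{k-1}.
\]

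Finally I would close the estimate by identifying the scalar sum. Since $\binom{-1/2}{k}$ has sign $(-1)^k$, one has $\bigl|\binom{-1/2}{k}\bigr| = (-1)^k\binom{-1/2}{k}$, so the identity $(1-x)^{-1/2} = \sum_{k=0}^\infty \bigl|\binom{-1/2}{k}\bigr| x^k$ holds on $[0,1)$. Differentiating in $x$ yields
\[
\sum_{k=1}^\infty k\,\Bigl|\binom{-1/2}{k}\Bigr|\,x^{k-1} = \tfrac12 (1-x)^{-3/2},
\]
and evaluating at $x=\|B(s)\|_F\in[0,\rho]\subset[0,1)$ delivers exactly the claimed bound. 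The only point requiring care is the interchange of differentiation and summation, but this is routine given the uniform majorant above; no deeper obstruction appears, and the argument is essentially bookkeeping once the right series is in hand.
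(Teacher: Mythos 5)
Your proof is correct and follows essentially the same route as the paper's: binomial series expansion of $(\mathrm I+B(s))^{-1/2}$, termwise differentiation using the noncommutative product rule and submultiplicativity of $\|\cdot\|_F$, the trace Cauchy--Schwarz bound $|\mathrm{Tr}(XY)|\le\|X\|_F\|Y\|_F$, and resummation of the scalar majorant series to $\tfrac12(1-\|B(s)\|_F)^{-3/2}$. The only cosmetic difference is that you apply trace Cauchy--Schwarz term by term before summing, while the paper first bounds $\|\tfrac{d}{ds}(\mathrm I+B(s))^{-1/2}\|_F$ by the scalar series and applies the trace inequality once at the end.
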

\begin{proof}
It suffices for us to bound $\| \frac d {ds} \Bigl( (\mathrm I + B(s) )^{-\frac 12} \Bigr) \|_F$.
Recall the power series expansion for a real number $|x|<1$
\begin{align}
&(1-x)^{-\frac 12}  = \sum_{k\ge 0} C_k x^k, \\
&  \frac 12 (1-x)^{-\frac 32} = \sum_{k\ge 1} C_k k x^{k-1}.
\end{align}
where the coefficients $C_k$ are all positive.  For integer $k\ge 1$, we note that
\begin{align}
\frac d {ds} \left( B^k \right) = B^{\prime} B^{k-1} + B B^{\prime} B\cdots B
+BBB^{\prime} B\cdots B+\cdots+B^{k-1} B^{\prime}.
\end{align}
In particular we do not assume the matrix $B^{\prime}$ commutes with $B$. On the other hand,
since the matrix Frobenius norm
is sub-multiplicative, we have
\begin{align}
\| \frac d {ds} \left( B^k \right)\|_F
\le k \|B\|_F^{k-1} \| B^{\prime} \|_F.
\end{align}
It follows that
\begin{align}
 & \left\| \frac d {ds} \Bigl(  (\mathrm I + B(s) )^{-\frac 12} \Bigr) \right\|_F \\
 \le & \sum_{k\ge 1} C_k \Bigl\| \frac d {ds}  ( B(s)^k ) \Bigr\|_F \\
 \le & \sum_{k\ge 1}  C_k k \|B(s)\|_F^{k-1} \| B^{\prime}(s) \|_F \\
 = & \frac 12 (1- \| B(s) \|_F)^{-\frac 32} \| B^{\prime}(s) \|_F.
 \end{align}
The desired result then easily follows.
\end{proof}

\begin{lem} \label{lemt0.2}
Denote by $\mathbb R^{m\times m}_{\mathrm{sp}}$ the set of symmetric positive-definite matrices in $\mathbb R^{m\times m}$.
Suppose $B=B(s):\; s\in[0,1]\to \mathbb R^{m\times m}_{\mathrm{sp}}$ is continuously differentiable with 
\begin{align} \label{t0.2a}
\xi^{\mathrm T} B(s) \xi \ge \eta_1>0, \qquad\forall\, \xi \in \mathbb R^m, \; \forall\, s \in [0,1].
\end{align}
Then 
\begin{align} \label{t0.2a0}
\frac d {ds} \mathrm{Tr} ( B(s)^{\frac 12} ) = \frac 12 \mathrm{Tr}
\left( B(s)^{-\frac 12} B^{\prime}(s) \right).
\end{align}
\end{lem}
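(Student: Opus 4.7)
The plan is to introduce $F(s) := B(s)^{1/2}$, which is a well-defined symmetric positive-definite matrix by the hypothesis \eqref{t0.2a}, differentiate the algebraic identity $F(s)^2 = B(s)$, and then use the cyclic invariance of the trace to handle the fact that $F(s)$ and $F'(s)$ need not commute.

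First I would establish that $F(s)$ is itself continuously differentiable in $s$. Since the spectrum $\sigma(B(s)) \subset [\eta_1, \|B(s)\|_F]$ stays uniformly bounded away from zero, one can use the holomorphic functional calculus
\begin{align}
F(s) = \frac{1}{2\pi i} \oint_\Gamma z^{1/2} (z\mathrm{I} - B(s))^{-1}\, dz,
\end{align}
where $\Gamma$ is a fixed positively-oriented contour in the right half-plane enclosing $\sigma(B(s))$ for all $s \in [0,1]$, and the principal branch of $z^{1/2}$ is taken. Differentiating under the integral and using $\frac{d}{ds}(z\mathrm{I}-B)^{-1} = (z\mathrm{I}-B)^{-1} B'(s) (z\mathrm{I}-B)^{-1}$ yields an explicit, continuous expression for $F'(s)$.

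Next, from $F(s)^2 = B(s)$ differentiation gives the Sylvester-type relation
\begin{align}
F'(s) F(s) + F(s) F'(s) = B'(s).
\end{align}
Multiplying on the left by $F(s)^{-1} = B(s)^{-1/2}$ and taking the trace, I would invoke cyclic invariance $\mathrm{Tr}(F^{-1} F' F) = \mathrm{Tr}(F' F F^{-1}) = \mathrm{Tr}(F')$ to obtain
\begin{align}
2\,\mathrm{Tr}(F'(s)) = \mathrm{Tr}\bigl(F(s)^{-1} B'(s)\bigr) = \mathrm{Tr}\bigl(B(s)^{-1/2} B'(s)\bigr).
\end{align}
Finally, exchanging $\frac{d}{ds}$ and the trace (a finite sum of diagonal entries) gives $\mathrm{Tr}(F'(s)) = \frac{d}{ds}\mathrm{Tr}(B(s)^{1/2})$, which is \eqref{t0.2a0}.

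The main technical obstacle is the first step, namely the differentiability of $B(s)^{1/2}$ in the noncommutative setting. One cannot simply ``differentiate $x^{1/2}$'' because $B(s)$ and $B'(s)$ need not commute, so a functional-calculus or resolvent argument (or equivalently, explicitly solving the Sylvester equation $XF + FX = B'$ for $X = F'$, whose invertibility is guaranteed by the positive spectrum of $F$) is needed. Once this is in place the identity itself reduces to a one-line computation exploiting trace cyclicity.
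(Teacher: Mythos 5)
Your proof is correct, but it follows a genuinely different route from the paper's. You differentiate the algebraic relation $F(s)^2 = B(s)$ to obtain the Sylvester identity $F'F + FF' = B'$, then multiply by $F^{-1} = B^{-1/2}$ and invoke trace cyclicity to eliminate the non-commuting terms; the differentiability of $F(s)=B(s)^{1/2}$ is secured via a resolvent (holomorphic functional calculus) representation. The paper instead proceeds through the power-trace identity $\frac{d}{ds}\mathrm{Tr}(B^k) = k\,\mathrm{Tr}(B^{k-1}B')$, promotes it to the matrix exponential $\frac{d}{ds}\mathrm{Tr}(e^{\alpha_0 B}) = \alpha_0\,\mathrm{Tr}(e^{\alpha_0 B}B')$, and then appeals to a Gamma-function integral representation of the fractional power to pass to $B^{1/2}$ (note the paper's displayed formula actually gives $B^{-1/2}$, not $B^{1/2}$, as $\frac{1}{\Gamma(1/2)}\int_0^\infty e^{-tB}t^{-1/2}\,dt = B^{-1/2}$, so a small correction or a different representation is needed there, though the method survives). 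Both arguments hinge on trace cyclicity to bypass the non-commutativity of $B$ and $B'$. Your route is more direct and self-contained once differentiability is in hand; the paper's route sidesteps the need to discuss differentiability of $B(s)^{1/2}$ explicitly by working term-by-term in the exponential series and then integrating, at the cost of introducing the integral representation. Either is acceptable; your treatment of the differentiability issue (via the contour integral, or equivalently the solvability of the Sylvester equation because $F$ has strictly positive spectrum) is a point the paper glosses over and is worth stating, as you do.
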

\begin{rem}
Later we shall take $B(s) = \mathrm I + \epsilon \phi(s) \phi(s)^{\mathrm T} $ with
$\epsilon>0$ sufficiently small and $ \phi (s) \in \mathbb R^{m\times m}$. In that case we
can directly make use of the power series expansion and derive \eqref{t0.2a0} for small
$\epsilon$. The strength of Lemma \ref{lemt0.2} is that the smallness of $\epsilon$ is not
needed.
\end{rem}
\begin{proof}
We begin by noting that for any integer $k\ge 2$, 
\begin{align}
\frac d {ds} \mathrm{Tr}(B(s)^k) & = \mathrm{Tr}\left( B^{\prime} B^{k-1}
+BB^{\prime} B\cdots B +\cdots+ B^{k-1} B^{\prime}\right) \\
&= k \mathrm{Tr} \left(B(s)^{k-1} B^{\prime} (s)\right).
\end{align}
It follows that for any $\alpha_0 \in \mathbb R$, 
\begin{align}
\frac d {ds} \mathrm{Tr}\left(e^{\alpha_0 B(s) } \right)
= \alpha_0 \mathrm{Tr} \left( e^{\alpha_0 B(s) } B^{\prime}(s)  \right).
\end{align}
Note that 
\begin{align} \label{t0.2b}
B(s)^{\frac 12} = \frac 1 {\Gamma(\frac 12)} \int_0^{\infty}
e^{-t B(s)} t^{-\frac 12} dt,
\end{align}
where $\Gamma(\cdot)$ is the usual Gamma function.
In view of the strict positivity assumption \eqref{t0.2a}, the convergence in 
\eqref{t0.2b} is out of question. Clearly
\begin{align}
\mathrm{Tr}( B(s)^{\frac 12}) 
= \frac 1 {\Gamma(\frac 12)} \int_0^{\infty}
\mathrm{Tr}\left( e^{-t B(s)}  \right)t^{-\frac 12} dt.
\end{align}
The desired result then easily follows.
\end{proof}

\begin{lem} \label{lemt0}
Let $U_0$, $H \in \mathbb R^{m\times m}$ satisfy $\| U_0\|_F \le \sqrt m$ and $\| U_0+H\|_F\le
\sqrt m$.  Let $\tau>0$. For $s \in [0,1]$, define
\begin{align}
&\phi= \phi(s) =U_0 + s H;  \\
& h(s) = \mathrm{Tr} \Bigl(
\frac 1 {2\tau} \phi \phi^{\mathrm T}
-\frac{e^{\tau} }{\tau (e^{2\tau}-1) }
\bigl(  (\mathrm{I} + (e^{2\tau}-1) \phi \phi^{\mathrm T} )^{\frac 12} - \mathrm {I} \bigr)
\Bigr).
\end{align}
We have
\begin{align}
& h^{\prime}(0)
= \frac 1 {\tau} \mathrm{Tr} (U_0 H^{\mathrm T} )
-\frac{e^{\tau}} {\tau} \mathrm{Tr}
\Bigl( ( \mathrm I + (e^{2\tau}-1) U_0 U_0^{\mathrm T} )^{-\frac 12} U_0 
H^{\mathrm T} \Bigr).  \label{t1}
\end{align}
If $e^{\tau} (e^{2\tau}-1) m \le \epsilon_0 <1$, then
\begin{align}
&\max_{0\le s\le 1} h^{\prime\prime}(s) \le  
\frac 1 {\tau} \Bigl( 1+ (1-\epsilon_0)^{-\frac 32} \epsilon_0 \Bigr) \| H \|_F^2. \label{t2}
\end{align}
If $e^{\tau} (e^{2\tau}-1) m \le \epsilon_0$ and $(1-\epsilon_0)^{-\frac 32} 
\epsilon_0\le 1$, then
\begin{align}
-h^{\prime}(0) \le h(0) - h(1)  +\frac 1 {\tau} \| H\|_F^2. \label{t3}
\end{align}
\begin{rem}
If we take $\epsilon_0=0.43$, then
\begin{align}
(1-\epsilon_0)^{-\frac 32} \epsilon_0 \approx 0.99209<1.
\end{align}
\end{rem}

\end{lem}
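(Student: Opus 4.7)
The plan is to compute $h'(s)$ and $h''(s)$ symbolically, using Lemma~\ref{lemt0.2} to differentiate the trace of the matrix square root and Lemma~\ref{lemt0.1} to estimate the derivative of the inverse square root. The bound \eqref{t3} will then follow from the integral form of Taylor's remainder theorem applied to $h$ on $[0,1]$, together with \eqref{t2}.

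First I would record the auxiliary computations. Write $\alpha=e^{2\tau}-1$, $\psi(s)=\phi(s)\phi(s)^{\mathrm T}$, and $B(s)=\mathrm I+\alpha\psi(s)$. Then $\psi'(s)=H\phi^{\mathrm T}+\phi H^{\mathrm T}$, $\psi''(s)=2HH^{\mathrm T}$, and since $\phi(s)=(1-s)U_0+s(U_0+H)$, the hypothesis gives $\|\phi(s)\|_F\le\sqrt m$, hence $\|\alpha\psi(s)\|_F\le\alpha m$ and $B(s)\succeq \mathrm I$. For \eqref{t1}, the derivative of the quadratic piece is immediate from $\frac{d}{ds}\mathrm{Tr}(\phi\phi^{\mathrm T})=2\,\mathrm{Tr}(\phi H^{\mathrm T})$. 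For the square root piece I invoke Lemma~\ref{lemt0.2} (with $B(s)$ symmetric positive-definite) to get $\frac{d}{ds}\mathrm{Tr}(B(s)^{1/2})=\tfrac12\mathrm{Tr}(B(s)^{-1/2}B'(s))$, so at $s=0$ this equals $\tfrac{\alpha}{2}\mathrm{Tr}(B(0)^{-1/2}(HU_0^{\mathrm T}+U_0H^{\mathrm T}))$. Since $B(0)^{-1/2}$ is symmetric and (commuting with $U_0U_0^{\mathrm T}$ is irrelevant here) the cyclicity/transpose identities $\mathrm{Tr}(A)=\mathrm{Tr}(A^{\mathrm T})$ give $\mathrm{Tr}(B(0)^{-1/2}HU_0^{\mathrm T})=\mathrm{Tr}(U_0H^{\mathrm T}B(0)^{-1/2})=\mathrm{Tr}(B(0)^{-1/2}U_0H^{\mathrm T})$, so the two pieces combine and \eqref{t1} drops out after multiplying by the prefactor.

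Next, for \eqref{t2}, differentiate again. The quadratic piece contributes exactly $\tfrac{1}{\tau}\|H\|_F^2$. For the square root piece, differentiating once more using Lemma~\ref{lemt0.2} yields
\begin{equation}
\frac{d^2}{ds^2}\mathrm{Tr}(B(s)^{1/2})=\frac{\alpha}{2}\,\mathrm{Tr}\!\left(\frac{d}{ds}\bigl(B(s)^{-1/2}\bigr)\psi'(s)\right)+\alpha\,\mathrm{Tr}\!\left(B(s)^{-1/2}HH^{\mathrm T}\right).
\end{equation}
The second term is nonnegative (and at most $\|H\|_F^2$ times $\alpha$ using $B^{-1/2}\preceq\mathrm I$, though we only need a sign for the lower bound on $F''$). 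For the first term I apply Lemma~\ref{lemt0.1} with $B(s)=\alpha\psi(s)$ and $B_1=\psi'(s)$: under the hypothesis $e^\tau\alpha m\le\epsilon_0<1$ we have $\|\alpha\psi(s)\|_F\le\alpha m\le\epsilon_0$, and $\|\psi'(s)\|_F\le 2\sqrt m\,\|H\|_F$, giving
\begin{equation}
\left|\mathrm{Tr}\!\left(\tfrac{d}{ds}\bigl(B(s)^{-1/2}\bigr)\psi'(s)\right)\right|\le\tfrac12(1-\alpha m)^{-3/2}\alpha\|\psi'\|_F^2\le 2\alpha m(1-\alpha m)^{-3/2}\|H\|_F^2.
\end{equation}
Since $h''(s)$ equals $\tfrac{1}{\tau}\|H\|_F^2$ minus $\tfrac{e^\tau}{\tau\alpha}$ times the above second derivative, the sign work gives
\begin{equation}
h''(s)\le\tfrac{1}{\tau}\|H\|_F^2+\tfrac{e^\tau\alpha m}{\tau}(1-\alpha m)^{-3/2}\|H\|_F^2\le\tfrac{1}{\tau}\bigl(1+\epsilon_0(1-\epsilon_0)^{-3/2}\bigr)\|H\|_F^2,
\end{equation}
using $e^\tau\alpha m\le\epsilon_0$ and monotonicity of $(1-x)^{-3/2}$. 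This is exactly \eqref{t2}.

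Finally, for \eqref{t3}, Taylor with integral remainder gives
\begin{equation}
h(1)=h(0)+h'(0)+\int_0^1(1-s)h''(s)\,ds,
\end{equation}
so $-h'(0)=h(0)-h(1)+\int_0^1(1-s)h''(s)\,ds\le h(0)-h(1)+\tfrac12\max_s h''(s)$, and under the additional hypothesis $(1-\epsilon_0)^{-3/2}\epsilon_0\le 1$ the prefactor $\tfrac{1}{2\tau}(1+\epsilon_0(1-\epsilon_0)^{-3/2})$ is at most $\tfrac{1}{\tau}$. The main obstacle I anticipate is the $h'(0)$ computation: one must be careful with non-commutativity (in general $B(s)^{-1/2}$ and $H$ do not commute), and the symmetry identification $\mathrm{Tr}(B(0)^{-1/2}HU_0^{\mathrm T})=\mathrm{Tr}(B(0)^{-1/2}U_0H^{\mathrm T})$ relies on $B(0)^{-1/2}$ being symmetric together with the trace/transpose identity, not on any commutation with $H$. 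The second-derivative bound is more mechanical once Lemma~\ref{lemt0.1} is in hand.
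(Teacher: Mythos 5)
Your proposal is correct and follows essentially the same route as the paper's proof: compute $h'$ via Lemma~\ref{lemt0.2}, use the transpose/cyclicity trick with the symmetric factor $(\mathrm I+(e^{2\tau}-1)\phi\phi^{\mathrm T})^{-1/2}$ to collapse the two cross terms into \eqref{t1}, drop the nonnegative $\mathrm{Tr}(B^{-1/2}HH^{\mathrm T})$ contribution and invoke Lemma~\ref{lemt0.1} on the remaining term to get \eqref{t2}, then Taylor on $[0,1]$ for \eqref{t3}. The only differences are cosmetic: you use the integral form of the remainder while the paper uses the Lagrange form, and you overload the symbol $B$ for both $\mathrm I+\alpha\psi(s)$ and the Lemma~\ref{lemt0.1} argument $\alpha\psi(s)$, which you should disentangle before writing this up, but neither affects correctness.
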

\begin{proof}
Observe that for all $s\in [0,1]$
\begin{align}
\| \phi (s) \|_F = \| s (U_0+H) + (1-s) U_0 \|_F \le \sqrt m.
\end{align}

{ By Lemma \ref{lemt0.2}, we have}
\begin{align}
 h^{\prime}(s) &
=\mathrm{Tr}\Bigl( 
\frac 1{2\tau} \bigl(  \phi^{\prime} \phi^{\mathrm{T}}  + \phi (\phi^{\prime})^{\mathrm T}
\bigr)
- \frac {e^{\tau}} {\tau (e^{2\tau} -1) }
\cdot \frac 12 ( \mathrm{I} + (e^{2\tau}-1) \phi \phi^{\mathrm T} )^{-\frac 12}
(e^{2\tau}-1) ( \phi^{\prime} \phi^{\mathrm T} + \phi  (\phi^{\prime} )^{\mathrm T} )
\Bigr) \\
& =\mathrm{Tr}\Bigl( 
\frac 1{\tau} \phi H^{\mathrm T}
- \frac {e^{\tau}} {\tau  }
\cdot \frac 12 ( \mathrm{I} + (e^{2\tau}-1) \phi \phi^{\mathrm T} )^{-\frac 12}
( H \phi^{\mathrm T} + \phi  H^{\mathrm T} )
\Bigr).
\end{align}
The equality \eqref{t1} follows from the fact that if $A\in \mathbb R^{m\times m}$ is symmetric,
then 
\begin{align}
\mathrm{Tr} (A B^T) = \mathrm{Tr} (BA)=\mathrm{Tr} (AB), \qquad\forall\, B \in 
\mathbb R^{m\times m}.
\end{align}

By direction computation, we also have
\begin{align}
h^{\prime\prime}(s)
& = \mathrm{Tr}( \frac 1 {\tau} H H^{\mathrm T} )
- \frac {e^{\tau}} {\tau} \mathrm{Tr} \Bigl(
(\mathrm{I} + (e^{2\tau}-1) \phi \phi^{\mathrm T} )^{-\frac 12} H H^{\mathrm T} \Bigr) \\
& \qquad - \frac{e^{\tau} } {2\tau}
\mathrm{Tr} \left( \frac d {ds}  \Bigl( 
( \mathrm{I} + (e^{2\tau}-1) \phi \phi^{\mathrm T} )^{-\frac 12} \Bigr)
( H \phi^{\mathrm T} + \phi  H^{\mathrm T} ) \right).
\end{align}
Clearly
\begin{align}
&\mathrm{Tr} \Bigl(
(\mathrm{I} + (e^{2\tau}-1) \phi \phi^{\mathrm T} )^{-\frac 12} H H^{\mathrm T} \Bigr)  \\
=& \mathrm{Tr} \Bigl( H^{\mathrm T}
(\mathrm{I} + (e^{2\tau}-1) \phi \phi^{\mathrm T} )^{-\frac 12} H \Bigr) \ge 0.
\end{align}
Note that 
\begin{align}
(e^{2\tau}-1)\| \phi \phi^{\mathrm T} \|_F \le 
(e^{2\tau}-1) m  <\epsilon_0<1.
\end{align}
By Lemma \ref{lemt0.1} we have
\begin{align}
& \frac {e^{\tau}} {2\tau} \mathrm{Tr} \left( \frac d {ds}  \Bigl( 
( \mathrm{I} + (e^{2\tau}-1) \phi \phi^{\mathrm T} )^{-\frac 12} \Bigr)
( H \phi^{\mathrm T} + \phi  H^{\mathrm T} ) \right)\\
\le &\; \frac{e^{\tau} } {2\tau} \cdot \frac 12 
\Bigl( 1- (e^{2\tau}-1) \| \phi \phi^{\mathrm T} \|_F \Bigr)^{-\frac 32}
(e^{2\tau}-1) \| H \phi^{\mathrm T} + \phi H^{\mathrm T} \|_F^2 \\
\le & \; \frac 1 {\tau} (1-\epsilon_0)^{-\frac 32} e^{\tau} (e^{2\tau}-1)  m \| H \|_F^2.
\end{align}
Since $\mathrm{Tr}(HH^{\mathrm T} ) = \| H\|_F^2$, it follows that
\begin{align}
 h^{\prime\prime}(s) \le 
\frac 1 {\tau} \Bigl( 1+ (1-\epsilon_0)^{-\frac 32} e^{\tau} (e^{2\tau}-1) m \Bigr) \| H \|_F^2.
\end{align}
The inequality \eqref{t3} follows from a simple Taylor expansion of $h(s)$, namely
\begin{align}
h(1)  \le  h(0) + h^{\prime}(0)  + \frac 12 \max_{0\le s \le 1} h^{\prime\prime}(s).
\end{align}

\end{proof}

\begin{thm}[Modified energy dissipation for matrix-valued AC with mild splitting step constraint]
Suppose  $\Omega =[-\pi, \pi]^d$ is the $2\pi$-periodic 
$d$-dimensional torus in physical dimensions $d\le 3$. 
Let  $U^0: \; \Omega \to  \mathbb R^{m\times m}$ satisfy $ \| \|U^{0}(x)\|_F \|_{L^{\infty}_x} \le \sqrt m$. 
Recall $\mathcal S_{\mathcal L}(t) = e^{t \Delta}$ and  for $A\in
\mathbb R^{m\times m}$, 
\begin{align}
\mathcal S_{\mathcal N}(t) A\coloneqq \left((e^{2t}-1) A A^{\mathrm T} +\mathrm  I\right)^{-\frac12} e^t A.
\end{align}
Define for $n\ge 0$ the Strang-splitting iterates
\begin{align}
U^{n+1}  = \mathcal S_{\mathcal L}\left( \tau/2\right) \mathcal S_{\mathcal N}\left( \tau \right)\mathcal S_{\mathcal L}\left( \tau/2\right) U^n.
\end{align}
If $\tau>0$ satisfies $m e^\tau (e^{2\tau}-1)\le 0.43$,  then 
\begin{align}
\widetilde E(\tilde {U}^{n+1})\le \widetilde E(\tilde {U}^{n}),\quad \forall\, n\ge 0,
\end{align}
where 
\begin{align}
& \tilde{U}^n = \mathcal S_{\mathcal L}\left( \tau/2\right)U^n ; \\
&\widetilde E({U}) =\int_\Omega \frac{1}{2\tau}\left\langle (e^{-\tau \Delta} -1)  {U},  {U} \right\rangle_F + \left\langle G({U}),I\right\rangle_F \, dx;\\
& G(U)  = \frac1{2\tau} U U^{\mathrm T} - \frac{e^\tau}{\tau(e^{2\tau}-1)} \left(\left( \mathrm I+(e^{2\tau}-1)U U^{\mathrm T}\right)^{\frac12}-\mathrm  I\right).
\end{align}
In the above $\langle A,B \rangle_F = \mathrm{Tr}(A^{\mathrm T}B)
= \sum_{i,j} A_{ij} B_{ij} $ denotes the usual Frobenius inner product.
\end{thm}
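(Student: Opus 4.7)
The plan is to mirror the unconditional vector-valued dissipation proof, but to substitute the pointwise concavity used there (Lemma \ref{s8}) with the sharp one-sided Taylor inequality \eqref{t3} supplied by Lemma \ref{lemt0}, which is the place where the splitting-step constraint enters. First I would reduce one Strang step to a quasi-variational identity on $\tilde U^n$: by commuting the two linear half-steps one has
\begin{align*}
\tilde U^{n+1} = e^{\tau \Delta}\, \mathcal S_{\mathcal N}(\tau)\, \tilde U^n,
\end{align*}
which, after applying $e^{-\tau \Delta}$ and adding and subtracting $\tilde U^n$, rearranges into
\begin{align*}
\tfrac 1 \tau (e^{-\tau \Delta} - \mathrm I)\, \tilde U^{n+1} + \tfrac 1 \tau (\tilde U^{n+1} - \tilde U^n) = \tfrac 1 \tau \bigl( \mathcal S_{\mathcal N}(\tau) \tilde U^n - \tilde U^n \bigr).
\end{align*}

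Next I would record the pointwise bound $\| \tilde U^n(x) \|_F,\, \| \tilde U^{n+1}(x) \|_F \le \sqrt m$, which follows from the $L^\infty$-stability in Theorem \ref{thm2} combined with the contraction property \eqref{2t1}. This permits a pointwise application of Lemma \ref{lemt0} at each $x \in \Omega$ with $U_0 = \tilde U^n(x)$ and $H = \tilde U^{n+1}(x) - \tilde U^n(x)$; under the assumption $m e^\tau (e^{2\tau} - 1) \le 0.43$ the hypotheses of the lemma hold with $\epsilon_0 = 0.43$, and a direct matching of the formula \eqref{t1} for $h^{\prime}(0)$ with the right-hand side of the quasi-variational identity produces pointwise
\begin{align*}
\bigl\langle \tfrac 1 \tau (\mathcal S_{\mathcal N}(\tau) \tilde U^n - \tilde U^n),\; \tilde U^{n+1} - \tilde U^n \bigr\rangle_F \le \mathrm{Tr}\,G(\tilde U^n) - \mathrm{Tr}\,G(\tilde U^{n+1}) + \tfrac 1 \tau \| \tilde U^{n+1} - \tilde U^n \|_F^2.
\end{align*}
I would then take the Frobenius inner product of the quasi-variational identity with $\tilde U^{n+1} - \tilde U^n$, integrate in $x$, and use the elementary symmetric-operator identity $\langle L v, v - w\rangle_F = \tfrac 12 (\langle L v, v\rangle_F - \langle L w, w\rangle_F) + \tfrac 12 \langle L (v-w), v-w\rangle_F$ with $L = e^{-\tau \Delta} - \mathrm I$, $v = \tilde U^{n+1}$, $w = \tilde U^n$. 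The $\tfrac 1 \tau \| \tilde U^{n+1} - \tilde U^n \|_F^2$ terms cancel, and since $L$ is symmetric and positive semi-definite on $L^2(\Omega)$ the remainder is non-positive, yielding $\widetilde E(\tilde U^{n+1}) \le \widetilde E(\tilde U^n)$.

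The only genuinely new obstacle compared with the vector case is the loss of global concavity of $\mathrm{Tr}\, G$ in the matrix setting: along the segment $\phi(s) = U_0 + s H$ the non-commutativity of $\phi \phi^{\mathrm T}$ with its $s$-derivative rules out the clean pointwise inequality of Lemma \ref{s8}, and the only available substitute is a quantitative control of $h^{\prime\prime}(s)$ through the power-series bound of Lemma \ref{lemt0.1}. This is precisely the origin of the threshold $0.43$: the numerical inequality $(1-\epsilon_0)^{-3/2}\epsilon_0 < 1$ at $\epsilon_0 = 0.43$ is exactly what is needed to absorb the quadratic remainder back into the dissipation, so this route cannot be expected to yield unconditional dissipation.
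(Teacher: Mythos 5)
Your proof is correct and follows essentially the same route as the paper's: derive the quasi-variational identity $\frac{1}{\tau}(e^{-\tau\Delta}-\mathrm I)\tilde U^{n+1}+\frac{1}{\tau}(\tilde U^{n+1}-\tilde U^n)=\frac{1}{\tau}(\mathcal S_{\mathcal N}(\tau)\tilde U^n-\tilde U^n)$, apply Lemma~\ref{lemt0} pointwise with $\epsilon_0=0.43$ (identifying the right-hand side with $-h'(0)$), and use the polarization identity together with the positive semi-definiteness of $e^{-\tau\Delta}-\mathrm I$ on $L^2(\Omega)$. One small remark: the cross term $\frac{1}{2\tau}\int\langle(e^{-\tau\Delta}-\mathrm I)(\tilde U^{n+1}-\tilde U^n),\tilde U^{n+1}-\tilde U^n\rangle_F\,dx$ is non-\emph{negative} and appears on the left, so the conclusion follows by dropping it rather than by it being ``non-positive''; your explicit invocation of the maximum principle to secure the hypothesis $\|\tilde U^n(x)\|_F\le\sqrt m$ of Lemma~\ref{lemt0} is a welcome detail that the paper leaves implicit.
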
 
\begin{proof}
Observe that 
\begin{align}
e^{-\tau \Delta}\tilde{U}^{n+1}  = \left((e^{2\tau}-1) \tilde U^n {(\tilde U^n)}^{\mathrm T} + \mathrm I\right)^{-\frac12} e^\tau \tilde U^n.
\end{align}
We rewrite the above as
\begin{align} \label{t7}
\frac 1 {\tau} (e^{-\tau \Delta}-1)\tilde{U}^{n+1} + \frac 1 {\tau}(\tilde{U}^{n+1}-\tilde{U}^{n}) = \frac 1 {\tau} \left(  {\left((e^{2\tau}-1) \tilde{U}^n {(\tilde U^n)}^{\mathrm T} +\mathrm I\right)^{-\frac12}} {e^\tau\tilde{U}^n}-\tilde{U}^{n} \right).
\end{align}
Taking the Frobenius inner product with  $ 
\tilde U^{n+1} - \tilde U^{n} $ on both sides of \eqref{t7}, we obtain
\begin{align}
&\frac 1 {\tau} \langle  (e^{-\tau \Delta}-1)\tilde{U}^{n+1},\;
\tilde{U}^{n+1}-\tilde{U}^{n} \rangle_F
+ \frac 1 {\tau} \| \tilde{U}^{n+1}-\tilde{U}^{n} \|_F^2 \notag \\
=&\frac 1 {\tau} \left\langle {\left((e^{2\tau}-1) \tilde{U}^n {(\tilde U^n)}^{\mathrm T} + \mathrm I\right)^{-\frac12}} {e^\tau\tilde{U}^n}-\tilde{U}^{n} , \;\;
\tilde{U}^{n+1}-\tilde{U}^{n}  \right\rangle_F.
\end{align}
It is not difficult to check that
\begin{align}
 & \int_{\Omega} \langle  (e^{-\tau \Delta}-1)\tilde{U}^{n+1},\;
\tilde{U}^{n+1}-\tilde{U}^{n} \rangle_F \,dx \\
=&\; \frac 12 \int_{\Omega} 
\langle  (e^{-\tau \Delta}-1)\tilde{U}^{n+1},\;
\tilde{U}^{n+1} \rangle_F\, dx -\frac 12 \int_{\Omega} 
\langle  (e^{-\tau \Delta}-1)\tilde{U}^{n},\;
\tilde{U}^{n} \rangle_F \,dx \\
& \quad +\frac 12 \int_{\Omega} 
\langle  (e^{-\tau \Delta}-1)(\tilde{U}^{n+1}-\tilde{U}^n),\;
\tilde{U}^{n+1} -\tilde{U}^n\rangle_F\, dx \\
\ge &\; \frac 12 \int_{\Omega} 
\langle  (e^{-\tau \Delta}-1)\tilde{U}^{n+1},\;
\tilde{U}^{n+1} \rangle_F \,dx -\frac 12 \int_{\Omega} 
\langle  (e^{-\tau \Delta}-1)\tilde{U}^{n},\;
\tilde{U}^{n} \rangle_F \,dx.
\end{align}
By Lemma \ref{lemt0} and taking $\epsilon_0=0.43$ therein, we have
\begin{align}
&\int_{\Omega} \frac 1 {\tau} \left\langle {\left((e^{2\tau}-1) \tilde{U}^n {(\tilde U^n)}^{\mathrm T} + \mathrm I\right)^{-\frac12}} {e^\tau\tilde{U}^n}-\tilde{U}^{n} , \;\;
\tilde{U}^{n+1}-\tilde{U}^{n}  \right\rangle_F  \,dx\\
\le & \; \int_{\Omega} G(\tilde U^{n})  \,dx  - \int_{\Omega} G(\tilde U^{n+1}) \,dx
+\frac 1 {2\tau} \Bigl( 1+ (1-\epsilon_0)^{-\frac 32} \epsilon_0 \Bigr) \int_{\Omega} \| \tilde U^{n+1}
-\tilde U^n\|_F^2 \,dx \\
\le & \; \int_{\Omega} G(\tilde U^{n})  \,dx  - \int_{\Omega} G(\tilde U^{n+1}) \,dx
+\frac 1 {\tau}  \int_{\Omega} \| \tilde U^{n+1}
-\tilde U^n\|_F^2 \,dx.
\end{align}
It follows that
\begin{align}
\widetilde E(\tilde {U}^{n+1})\le \widetilde E(\tilde {U}^{n}).
\end{align}
\end{proof}

\begin{rem}
To put things into perspective, we explain the connection of the current work to the companion work \cite{LQ1d}. 
In the scalar case \cite{LQ1d}, we considered the scalar Allen-Cahn equation with both the polynomial potential and the logarithm potential. 
The contributions therein include not only the energy stability, but also the maximum principle for the  logarithm potential where a novel diagonal implicit Runge-Kutta method is proposed.

Concerning the general tensorial models, the current manuscript is inspired from the recent work of Osting and Wang \cite{OW20}. 
On the other hand, the proof of energy dissipation for matrix-valued case is highly nontrivial due to the non-commutativity of general matrices.   
For this, we developed a new machinery and several new monotonicity formulae to establish coercive $H^1$ control on the solution along with maximum principle estimates. 
\end{rem}

%

\section{Numerical experiments}

\subsection{Vector-valued AC equation}\label{sect4.1}
Consider the vector-valued AC equation 
\begin{align}\label{4.1}
\begin{cases}
\partial_t \mathbf u = \Delta \mathbf u + \mathbf u - |\mathbf u|^2 \mathbf u,
\qquad (t, x ) \in (0,\infty) \times \Omega;  \\
\mathbf u \Bigr|_{t=0} =\mathbf u^0, \quad x \in \Omega.
\end{cases}
\end{align}
on the 1-periodic torus $\Omega = \left[-\pi,\pi\right]^2$. 
We use the Strang splitting method given to this equation with a fixed
splitting time step $\tau = 10^{-4}$. 
For the spatial discretization, we use the pseudo-spectral method with $256\times 256$ Fourier modes. 
We take a uniformly distributed random vector $\mathbf v^0$  defined at each grid point.
The initial condition is given by
\begin{align}
\mathbf u^0 = 
\begin{cases} 
0.8 \frac {\mathbf v^0} {|\mathbf v^0|}, \quad \text{if $|\mathbf v^0| \ne 0$};\\
\mathbf 0, \quad \text{otherwise}.
\end{cases}
\end{align}
In this way $\mathbf u^0$ has a fixed magnitude $0.8$ with randomly distributed directions.

Figure \ref{fig:sol_ACvector} shows the computed vector field $\mathbf u$ at $t=0,~0.004,~0.008,~0.016,~0.032,$ and $0.05$ respectively. Define the standard energy 
and the modified energy:
\begin{align}
E(\mathbf u) &= \int_{\Omega} \left( \frac 12 |\nabla \mathbf u|^2 + \frac 14
( | \mathbf u |^2 -1)^2 \right) \,dx; \\
 \widetilde E (\mathbf u) &=\int_\Omega \left( \frac{1}{2\tau}\left\langle (e^{-\tau \Delta} -1)  {\mathbf u},  {\mathbf u} \right\rangle + G({\mathbf u})  +\frac 14 \right)\, dx;\\
&  =\int_{\Omega} \left( \frac{1}{2\tau}\left\langle (e^{-\tau \Delta} -1)  {\mathbf u},  {\mathbf u} \right\rangle
+ \frac1{2\tau} |\mathbf u|^2 - \frac{e^\tau}{\tau(e^{2\tau}-1)} \left(\left(
1+(e^{2\tau}-1)|\mathbf u|^2\right)^{\frac12}-1\right)  +\frac 14\right)\,dx.
\end{align}
It should be noted that a harmless constant $1/4$ is added in the definition of 
$\widetilde E$ to ensure the consistency with the standard energy.
It can be observed that the initial disordered state becomes ordered quickly.
Figure \ref{fig:energy_ACvector} plots the evolution of the standard and modified energies as well as their difference $\Delta E = |\widetilde E-E|$.
Reassuringly  both energy functionals decrease monotonically in time. 

\begin{figure}[!h]
\includegraphics[trim={1in 0.8in 1in 0},clip,width=\textwidth]{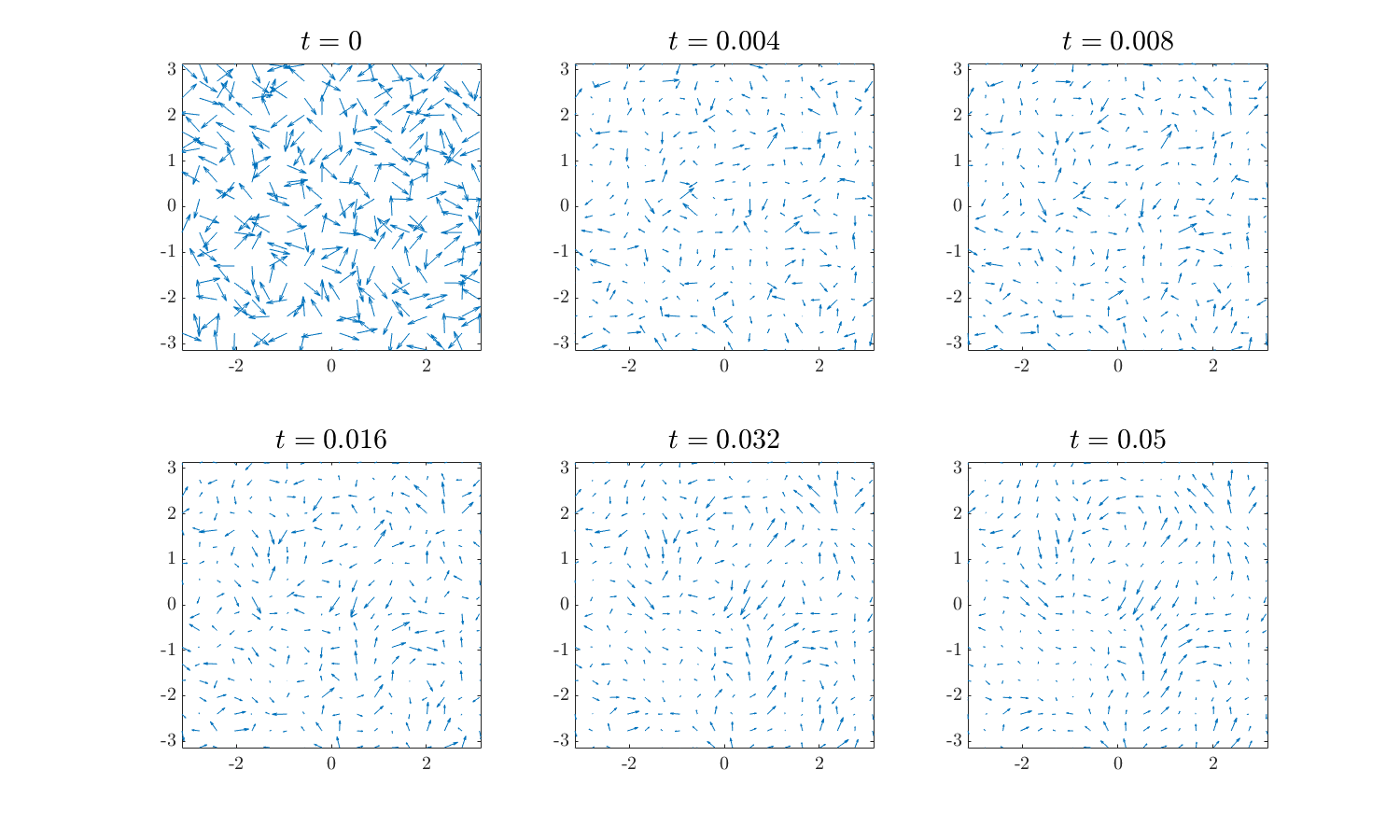}
\caption{Vector field $\mathbf u$ at $t=0,~0.004,~0.008,~0.016,~0.032,$ and $0.05$ respectively for the vector-valued AC equation. }\label{fig:sol_ACvector}
\includegraphics[trim={0in 0 0in 0},clip,width=0.48\textwidth]{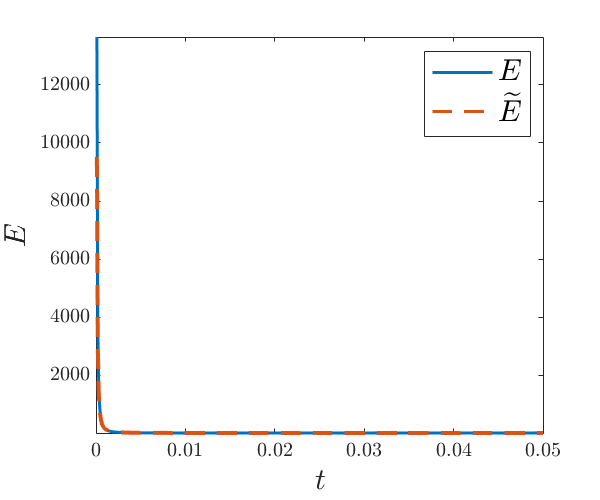}
\includegraphics[trim={0in 0 0in 0},clip,width=0.48\textwidth]{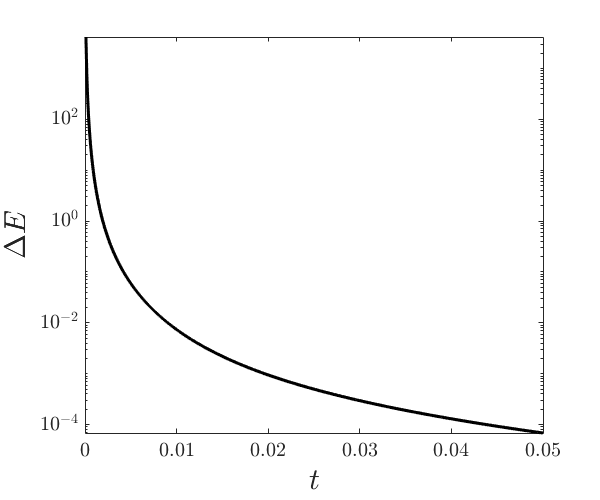}
\caption{Evolution of the original and modified energy as well as their difference $\Delta E = |\widetilde E-E|$ for the vector-valued AC equation. }\label{fig:energy_ACvector}
\end{figure}

We now test the convergence order of the Strang splitting method for vector-valued Allen-Cahn equation with the same settings as above. 
Since the exact PDE solution is not available, we take a small splitting step $\tau = 10^{-6}$ to obtain an ``almost exact'' solution at $t = 0.01$. 
Then, we take several different splitting steps $\tau = \frac{1}{100}\times2^{-k}$ with $k = 5,6,\ldots,10$ to obtain corresponding numerical solutions at $t = 0.01$. 
The $\ell_2$-errors between these solutions and the ``almost exact'' solution are summarized in Table \ref{tab2}. 
It can be observed that the convergence rate is about $2$.

\begin{table}[htb!]
\renewcommand\arraystretch{1.5}
\begin{center}
\def\temptablewidth{1\textwidth}
\caption{$\ell_2$-errors of numerical solutions at time $t = 0.01$ to the vector-valued AC equation \eqref{4.1} for different splitting steps computed by the Strang splitting method.}\label{tab2}
\vspace{-0.2in}
{\rule{\temptablewidth}{1.1pt}}
\begin{tabular*}{\temptablewidth}{@{\extracolsep{\fill}}ccccccc}
   $\tau$     &$\frac1{3200}$     &$\frac1{6400}$
   &$\frac1{12800}$  & $\frac1{25600}$    &$\frac1{51200}$    &$\frac1{102400}$\\  \hline
  $\ell_2$-error   & $3.298\times 10^{-6}$   & $1.199\times 10^{-6}$   & $3.384\times 10^{-7}$ & $8.932\times 10^{-8}$ & $2.277\times 10^{-8}$ & $5.684\times10^{-9}$\\[3pt]
rate  & -- &$1.4604$ &$1.8245$ &$1.9218$ & $1.9716$ & $2.0024$ 
\end{tabular*}
{\rule{\temptablewidth}{1pt}}
\end{center}
\end{table}

\subsection{Matrix-valued AC equation}\label{sect4.2}
Consider the matrix-valued AC equation
\begin{align}
\begin{cases}
\partial_t U = \Delta U  + U - UU^{\mathrm T} U, \quad (t, \mathbf x) \in (0,\infty) \times \Omega;\\
U \Bigr|_{t=0}= U^0.
\end{cases}
\end{align}
The spatial domain $\Omega=[-\pi, \, \pi]^2$ is the $2\pi$-periodic torus in dimension two.
By a slight abuse of notation, we set the initial condition in polar coordinates as
\begin{equation}\label{eq:init1}
U^0(r,\theta) = \left\{
\begin{aligned}
& \left[\begin{array}{cc}\cos\alpha & -\sin\alpha \\ \sin\alpha & \cos\alpha\end{array}\right]  && \mbox{if } r<0.6\pi+0.12\pi\sin(6\theta);\\
& \left[\begin{array}{cc}\cos\alpha & \sin\alpha \\ \sin\alpha & -\cos\alpha\end{array}\right]  
&& \mbox{otherwise,}
\end{aligned}
\right.
\end{equation}
Here $\alpha(x,y) = \frac\pi 2\sin(x+y)$, and $(r,\theta)$ is the polar coordinate of $\mathbf x=(x,y)$. 
For spatial discretization we use the pseudo-spectral method with $256\times256$ Fourier modes.  
The splitting time step is fixed as $\tau = 0.01$.
In Figure \ref{fig:sol_ACmatrix}, the domain is colored by the sign of the determinant of $U$, that is, 
\begin{equation}
\begin{aligned}
& \mbox{yellow} && \mbox{if } \det(U(t,x,y))>0; \\
& \mbox{blue} && \mbox{if } \det(U(t, x,y))<0.
\end{aligned}
\end{equation}
The vector field is generated by the first column vector of the matrix $U(t,x,y)$.  Note that
for $t=0$ this is just 
\begin{equation}
\left(\begin{array}{c} \cos\alpha \\ \sin\alpha \end{array}\right).
\end{equation}
It can be observed that the initial star-shaped line defect shrinks in time.
The evolution of the standard and the modified energy as well as their difference $\Delta E = |\widetilde E-E|$ are plotted in Figure \ref{fig:energy_ACmatrix}.  Clearly these two
energy functionals are in good agreement for small $\tau>0$. 

\begin{figure}[!]
\includegraphics[trim={1in 0 1in 0},clip,width=\textwidth]{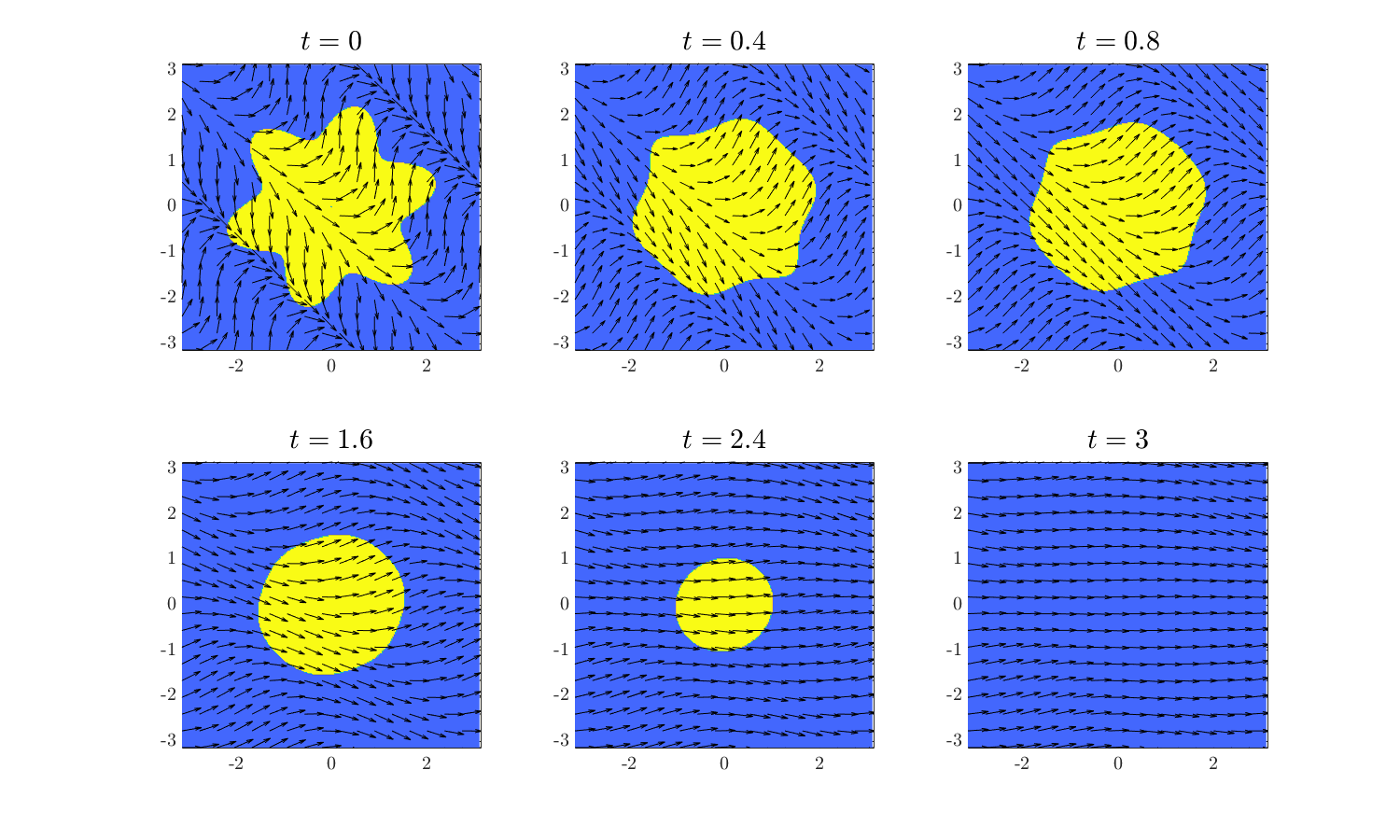}
\caption{Dynamics of line defect for the matrix-valued AC equation  at $t=0,~0.4,~0.8,~1.6,~2.4,$ and $3$ respectively with initial condition \eqref{eq:init1} in Section \ref{sect4.2}. }\label{fig:sol_ACmatrix}
\includegraphics[trim={0in 0 0in 0},clip,width=0.48\textwidth]{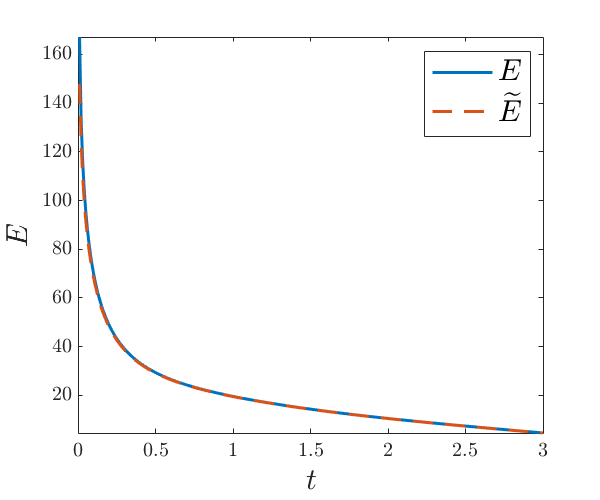}
\includegraphics[trim={0in 0 0in 0},clip,width=0.48\textwidth]{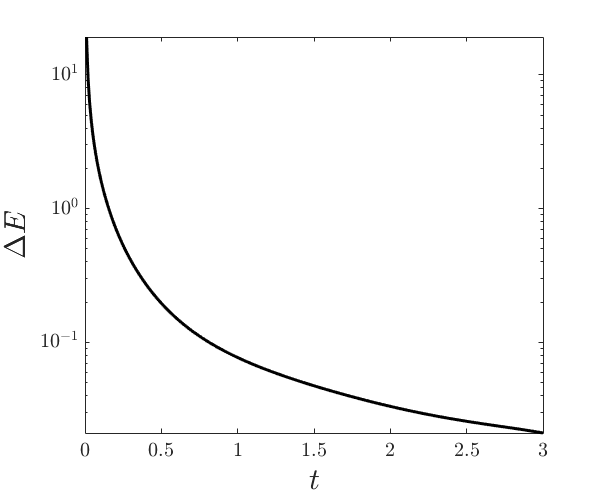}
\caption{Evolution of the original and modified energy as well as their difference $\Delta E = |\widetilde E-E|$ for the matrix-valued AC equation with initial condition \eqref{eq:init1}  in Section \ref{sect4.2}. }\label{fig:energy_ACmatrix}
\end{figure}

Next, we consider the initial condition given by the following. 
\begin{equation}\label{eq:init2}
U^0(r,\theta) = \left\{
\begin{aligned}
& \left[\begin{array}{cc}\cos\alpha & -\sin\alpha \\ \sin\alpha & \cos\alpha\end{array}\right]  && \mbox{if } |x|>0.5\pi |\sin(1.25y)|+0.4\pi,\\
& \left[\begin{array}{cc}\cos\alpha & \sin\alpha \\ \sin\alpha & -\cos\alpha\end{array}\right]  
&& \mbox{otherwise,}
\end{aligned}
\right.
\end{equation}
where $\alpha = y$.
The splitting time step is $\tau = 0.01$ and we take $256\times 256$ Fourier modes.
The dynamics of the line defect and the evolution of the energy are illustrated  in Figure \ref{fig:sol_ACmatrix2} and \ref{fig:energy_ACmatrix2} respectively.
It can be observed that the modified energy dissipation indeed holds in this case. 

\begin{figure}[!]
\includegraphics[trim={1in 0 1in 0},clip,width=\textwidth]{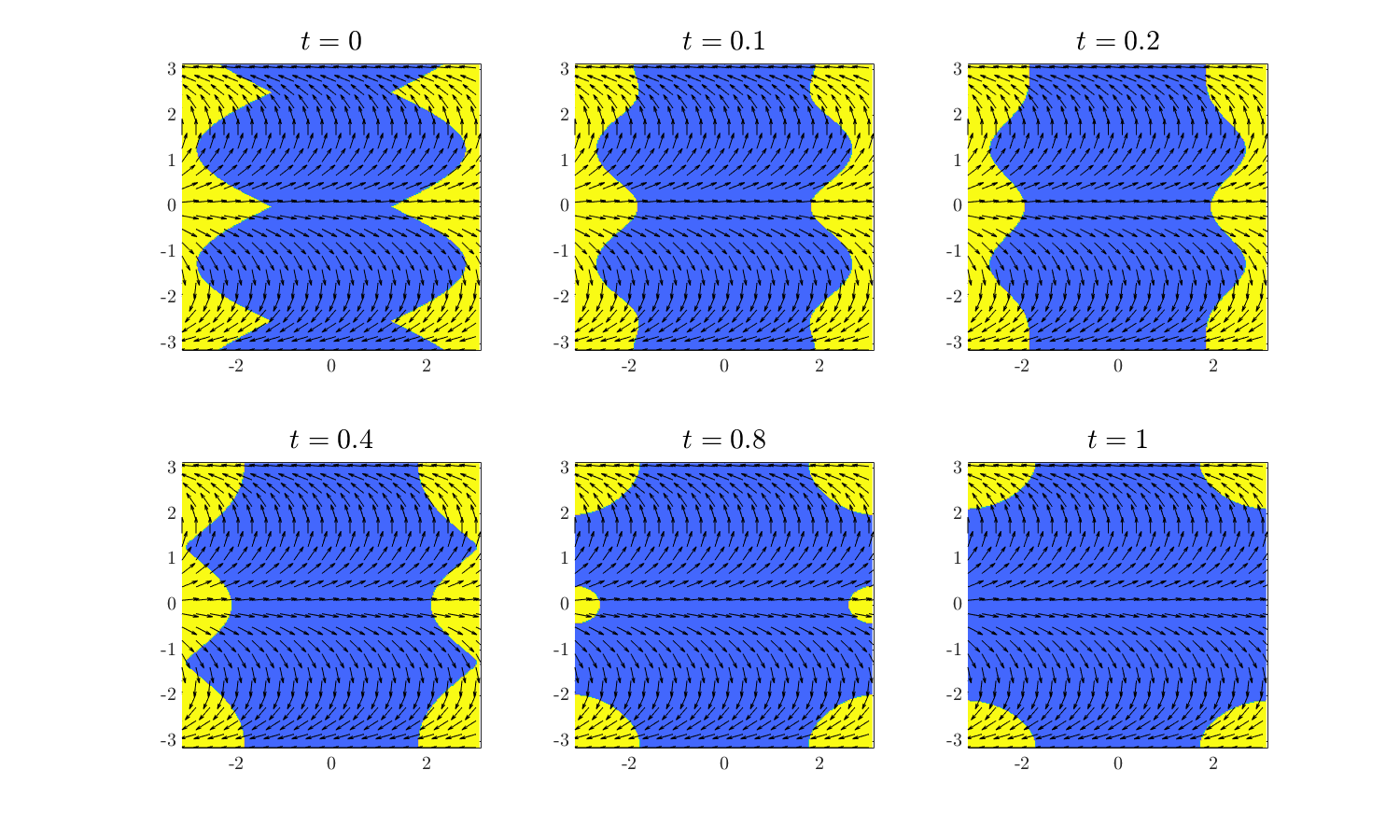}
\caption{Dynamics of line defect for the matrix-valued AC equation  at $t=0,~0.1,~0.2,~0.4,~0.8,$ and $1$ respectively with initial condition \eqref{eq:init2} in Section \ref{sect4.2}. }\label{fig:sol_ACmatrix2}
\includegraphics[trim={0in 0 0in 0},clip,width=0.48\textwidth]{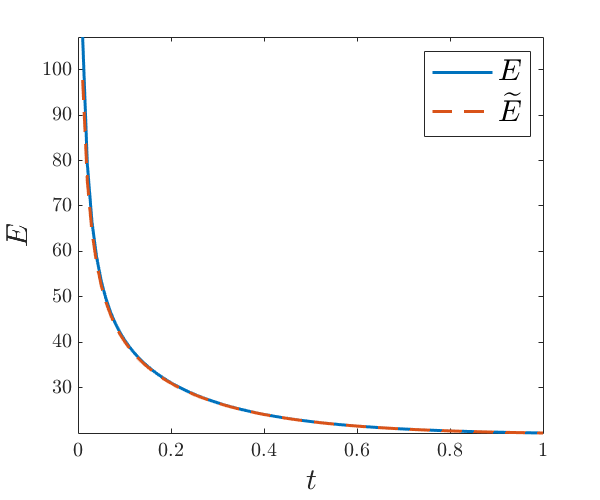}
\includegraphics[trim={0in 0 0in 0},clip,width=0.48\textwidth]{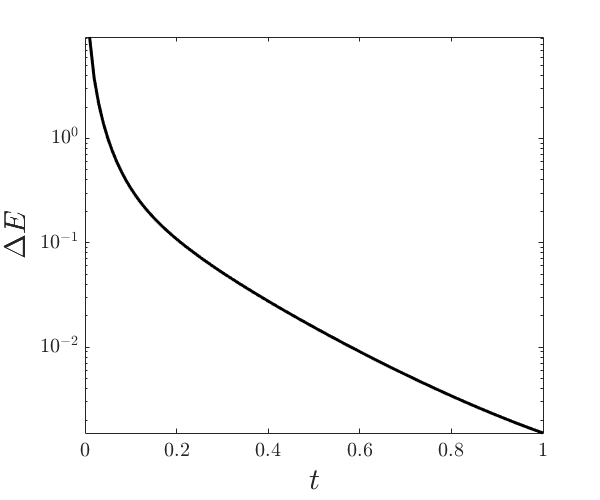}
\caption{Evolution of the original and modified energy as well as their difference $\Delta E = |\widetilde E-E|$ for the matrix-valued AC equation  with initial condition \eqref{eq:init2}  in Section \ref{sect4.2}. }\label{fig:energy_ACmatrix2}
\end{figure}

\section*{Acknowledgements}
The research of C. Quan is supported by NSFC Grant 11901281, the Guangdong Basic and Applied Basic Research Foundation (2020A1515010336), and the Stable Support Plan Program of Shenzhen Natural Science Fund (Program Contract No. 20200925160747003).


\begin{thebibliography}{99}


\bibitem{BB14}
T.  Batard and M.Bertalmio. 
\newblock On covariant derivatives and their applications to image regularization.
\newblock {\em SIAM Journal on Imaging Sciences} 7, no. 4: 2393-2422, 2014.

\bibitem{CKQT15}
Y. Cheng, A. Kurganov, Z. Qu, and T. Tang.
\newblock Fast and stable explicit operator splitting methods for phase-field
  models.
\newblock {\em Journal of Computational Physics}, 303:45--65, 2015.

\bibitem{WT16}
Z. Weng and L. Tang.
\newblock Analysis of the operator splitting scheme for the Allen-Cahn
  equation.
\newblock {\em Numerical Heat Transfer, Part B: Fundamentals}, 70(5):472--483,
  2016.

\bibitem{BJM02}
W. Bao, S. Jin, and P. A Markowich.
\newblock On time-splitting spectral approximations for the {S}chr{\"o}dinger
  equation in the semiclassical regime.
\newblock {\em Journal of Computational Physics}, 175(2):487--524, 2002.

\bibitem{T12}
M. Thalhammer.
\newblock Convergence analysis of high-order time-splitting pseudospectral
  methods for nonlinear {S}chr{\"o}dinger equations.
\newblock {\em SIAM Journal on Numerical Analysis}, 50(6):3231--3258, 2012.

\bibitem{D01}
S. Descombes.
\newblock Convergence of a splitting method of high order for
  reaction-diffusion systems.
\newblock {\em Mathematics of Computation}, 70(236):1481--1501, 2001.

\bibitem{JT90}
A. Jaffe and C. Taube. 
Vortices and Monopoles: Structure of Static Gauge Theories. 
Birkhauser, Boston. 

\bibitem{EH94}
Elliott, C. M., Hiroshi Matano, and Tang Qi. Zeros of a complex Ginzburg–Landau order parameter with applications to superconductivity.
 European Journal of Applied Mathematics 5, no. 4 (1994): 431-448.

\bibitem{Nie11}
S. Zhao, J. Ovadia, X. Liu, Y. Zhang, and Q. Nie.
\newblock Operator splitting implicit integration factor methods for stiff
  reaction--diffusion--advection systems.
\newblock {\em Journal of Computational Physics}, 230(15):5996--6009, 2011.

\bibitem{St68}
G. Strang.
\newblock On the construction and comparison of difference schemes.
\newblock {\em SIAM Journal on Numerical Analysis}, 5(3):506--517, 1968.

\bibitem{M90}
G. I Marchuk.
\newblock Splitting and alternating direction methods.
\newblock {\em Handbook of Numerical Analysis}, 1:197--462, 1990.

\bibitem{K10}
Y. Li, H. G. Lee, D. Jeong, and J. Kim.
\newblock An unconditionally stable hybrid numerical method for solving the
  {A}llen--{C}ahn equation.
\newblock {\em Computers \& Mathematics with Applications}, 60(6):1591--1606,
  2010.
  
\bibitem{LQ1a}
D. Li and C. Quan.
The operator-splitting method for {C}ahn-{H}illiard is stable. 
{\em arXiv:2107.01418}, 2021.

\bibitem{LQ1b}
D. Li and C. Quan.
On the energy stability of {S}trang-splitting for {C}ahn-{H}illiard.
{\em arXiv:2107.05349}, 2021.

\bibitem{LQ1c}
D. Li and C. Quan.
Negative time splitting is stable. {\em arXiv:2107.07332}, 2021.

\bibitem{LQ1d}
D. Li, C. Quan, and T. Tang.
Energy dissipation of {S}trang splitting method for {A}llen--{C}ahn equations.
{\em arXiv:2108.05214}, 2021.

\bibitem{L21}
D. Li. 
Effective Maximum Principles for Spectral Methods.
{\em Ann. Appl. Math.}, 37 (2021), pp. 131--290.

\bibitem{OW20}
B. Osting and D. Wang.
\newblock A diffusion generated method for orthogonal matrix-valued fields.
\newblock {\em Mathematics of Computation}, 89(322):515--550, 2020.

\bibitem{LS05}
A. S Lewis and H. S Sendov.
\newblock Nonsmooth analysis of singular values. {Part I: Theory}.
\newblock {\em Set-Valued Analysis}, 13(3):213--241, 2005.

\bibitem{LW20}
B. Li and Y. Wu. A fully discrete low-regularity integrator for the 1D periodic cubic nonlinear 
Schr\"{o}dinger equation. 
{\it Numer. Math.} (to appear), arXiv:2101.03728

\bibitem{LZ21}
M. Fei, F. Lin, W. Wang, and Z. Zhang. Matrix-valued Allen-Cahn equation and the Keller-Rubinstein-Sternberg problem. {\em arXiv:2106.08293}, 2021.

\end{thebibliography}

\end{document}